\documentclass{article}[12pt]
\usepackage{amsfonts,amsmath,latexsym,hyperref,graphicx,amssymb,amsthm,url,lineno,float}
\usepackage[abbrev,msc-links]{amsrefs}  
\usepackage{rotating}
\usepackage{mathtools}
\usepackage{tikz}
\usetikzlibrary{calc}
\usetikzlibrary{positioning}
\usepackage{color}

\newtheorem{theorem}{Theorem}[section] 
\newtheorem{lemma}[theorem] {Lemma}

\newtheorem{corollary}[theorem] {Corollary} 
\newtheorem{conjecture}[theorem] {Conjecture} 
\newtheorem{claim}[theorem] {Claim}
\theoremstyle{definition}

\title{The maximum number of cycles in a graph with fixed number of edges}
\author{Andrii Arman, Sergei Tsaturian\\ University of Manitoba}
\date{8 February 2017}

\begin{document}
\maketitle

\begin{abstract}
The main topic considered is maximizing the number of cycles in a graph with given number of edges. In 2009, Kir\'aly conjectured that there is constant $c$ such that any graph with $m$ edges has at most $(1.4)^m$ cycles. In this paper, it is shown that for sufficiently large $m$, a graph with $m$ edges has at most $(1.443)^m$ cycles. For sufficiently large $m$, examples of a graph with $m$ edges and $(1.37)^m$ cycles are presented. For a graph with given number of vertices and edges an upper bound on the maximal number of cycles is given. Also, exponentially tight bounds are proved for the maximum number of cycles in a multigraph with given number of edges, as well as in a multigraph with given number of vertices and edges.
\end{abstract}

\section{Introduction}
Counting the number of cycles in a graph is a problem that was studied for different classes of the graphs: graphs with given cyclomatic number, planar graphs, 3-regular and 4-regular graphs, and many others. However, only a few general bounds for number of cycles that use basic graph parameters are known. In this paper the bounds on the number of cycles in a graph as a function of number of vertices and edges are presented. 

Let $C(G)$ denote the number of cycles in a graph $G$. In 1897, Ahrens \cite{Ahrens} proved that for a graph $G$ with $n$ vertices, $m$ edges and $k$ components, 
\begin{align}\label{eq:oldbound}
m-n+k \leq C(G) \leq 2^{m-n+k}-1.
\end{align}

The lower bound in (\ref{eq:oldbound}) is tight; for example, it is achieved by any disjoint union of cycles and trees. The tightness of the upper bound in (\ref{eq:oldbound}) was shown by Mateti and Deo \cite{Mateti} and the only graphs for which the upper bound is tight are $K_3$, $K_4$, $K_{3,3}$ and $K_4-e$. Aldred and Thomassen \cite{AlTho} improved the upper bound in  (\ref{eq:oldbound}) by showing that for a connected graph $G$,

\begin{equation}\label{e:at}
C(G)\leq \frac{15}{16}2^{m-n+1}.
\end{equation}

Entringer and Slater \cite{Entringer} considered $C(G)$ for the class of connected graphs with fixed cyclomatic number $r=m-n+1$. It follows from the results of \cite{Entringer} that there is a $3$-regular connected graph $G$ for which $C(G)>2^{r-1}$. Shi \cite{Shi} presented an example of an outer-planar $3$-regular Hamiltonian graph $G$ with $C(G)=2^{r-1}+r-1$.

Kir\'{a}ly \cite{Kirali} investigated $C(G)$ for several classes of graphs: the union and the sum of two trees, 3-regular and 4-regular graphs, and graphs with average degree 4.  Kir\'{a}ly also conjectured that there is a constant $c$, such that for any graph $G$ that has $m$ edges, $$C(G)\leq c (1.4)^{m}.$$

Aldred and Thomassen \cite{AlTho} studied $C(G)$ for the class of planar graphs. Arman, Gunderson and Tsaturian \cite{AGT} studied $C(G)$ for the class of triangle-free graphs on $n$ vertices. In this paper, $C(G)$ is investigated for two other classes of graphs: those with $n$ vertices and $m$ edges, and those with $m$ edges.

The following notation is used. For $k\in \mathbb{Z}^+$, denote $\{i\in \mathbb{Z}; 1\leq i\leq k\}$ by $[k]$, and for a set $S$, denote $\{T \subseteq S : |T|=k\}$ by $[S]^k$. Graphs and multigraphs are defined as in \cite{bollobas}. Denote the average degree of a graph $G$ by $d(G)$, the maximum degree by $\Delta(G)$, and the minimum degree by $\delta(G)$.

Theorem~\ref{thm:newbound} implies that if graph $G$ has $n$ vertices and $m$ edges, then

\begin{equation}\label{eq:newbound}
C(G)\leq \begin{cases} \frac34\Delta(G)(\frac{m}{n-1})^{n-1}, \text{ if } \frac{m}{n-1}\geq 3,\\ \frac34 \Delta (G) \cdot (\sqrt[3]{3})^m, \text{ if } \frac{m}{n-1}<3.\end{cases}
\end{equation}

The bound in (\ref{eq:newbound}) is better than in (\ref{e:at}) for graphs with sufficiently large number of edges and average degree at least $4.25$.

For $m \in \mathbb{Z}^+$ let $C(m)$ be the maximum number of cycles in a graph with $m$ edges. In Corollary~\ref{cor:upperbound} it is shown that 
$$C(m)<8.25(\sqrt[3]{3})^m,$$
which for $m>4056$ implies
$$C(m)<1.443^m.$$

Theorem~\ref{thm:newbound} and Corollary~\ref{cor:upperbound} are proved in Section \ref{sec:mainthm}.
 
In Section~\ref{sec:minmax} it is shown that extremal graphs for $C(m)$ have bounded degrees. Namely, it is shown (Theorem~\ref{thm:maxdegree}) that if $G$ is a graph with $m$ edges with $C(G)=C(m)$, then $\Delta(G) \leq 11$.

In Section \ref{sec:example}, for $m$ sufficiently large, a graph $G$ with $m$ edges is constructed, such that 
$$C(G)\geq (2+\sqrt{8})^{\frac{m}{5}-1}\geq  1.37^m.$$
Corollary~\ref{cor:upperbound} and the result of Section \ref{sec:example} imply that for $m$ large enough,
\begin{equation}\label{eq:newbounds}
1.37^m \leq C(m) \leq 1.443^m.
\end{equation}

In Section~\ref{sec:multi}, the problems of maximizing number of cycles in multigraphs with given number of edges or given number of vertices and edges are considered. It is shown (Theorem~\ref{multiedges}) that if $G$ is a multigraph that has the most cycles among all multigraphs with $m$ "multi-edges", then 
$$ \frac{9}{10} (\sqrt[3]{3})^m\leq C(G)\leq 8.25 (\sqrt[3]{3})^m.$$

\section{Maximal degree of graphs with $C(m)$ cycles}\label{sec:minmax}
Recall that, for $m \in \mathbb{Z}^+$, $C(m)$ is the maximum number of cycles in a graph with $m$ edges.
\begin{theorem}\label{thm:maxdegree}
If $G$ is a graph with $m$ edges such that $C(G)=C(m)$, then $\Delta(G) \leq 11$.
\end{theorem}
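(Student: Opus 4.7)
The plan is a deletion-and-replacement argument by contradiction. Suppose, for the sake of contradiction, that $G$ is a graph with $m$ edges achieving $C(G)=C(m)$, but that some vertex $v\in V(G)$ has degree $d\ge 12$. I will exhibit a graph $G^*$ with at most $m$ edges and $C(G^*)>C(G)$, which contradicts the extremality of $G$ because $C(\cdot)$ is nondecreasing (one may always append isolated edges without creating new cycles).

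The replacement is $G^* := (G-v)\sqcup H$, where $H$ is built on a fresh vertex set, uses at most $d$ edges, and is chosen to maximize $C(H)$. Since no cycle crosses disjoint components, $C(G^*)=C(G-v)+C(H)$; writing $C_v := C(G)-C(G-v)$ for the number of cycles of $G$ passing through $v$, the inequality $C(G^*)>C(G)$ reduces to the clean statement $C(H)>C_v$. For the lower bound on $C(H)$, I would use the explicit construction introduced in Section~\ref{sec:example}, which produces graphs on $d$ edges carrying roughly $(2+\sqrt{8})^{d/5-1}\approx 1.37^{d}$ cycles; at the threshold $d=12$ one can alternatively verify by hand with small, highly cyclic gadgets (e.g.\ copies of $K_4$, $K_{3,3}$, or a truncated version of the Section~\ref{sec:example} construction), tuned to minimize slack.

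The hard part is bounding $C_v$ from above. Every cycle through $v$ decomposes as two edges $vv_i,\,vv_j$ together with a $v_i$--$v_j$ path in $G-v$, so $C_v = \sum_{\{i,j\}\subset N(v)} p_{ij}$, where $p_{ij}$ counts $v_i$--$v_j$ paths in $G-v$. Each such path is in bijection with a cycle of $(G-v)+v_iv_j$ through the added edge, so $p_{ij}\le C\bigl((G-v)+v_iv_j\bigr)\le C(m-d+1)$, and Corollary~\ref{cor:upperbound} yields the crude bound $p_{ij}\le 8.25\,(\sqrt[3]{3})^{m-d+1}$. This in isolation is too weak, because it depends on $m$ while my lower bound on $C(H)$ depends only on $d$.

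The main obstacle, then, is to upgrade the bound on $C_v$ to one that depends only on $d$. The route I would pursue is to exploit the extremality of $G$ itself: if $p_{ij}$ were large for many neighbor pairs, then the subgraph $G-v$ together with all the paths would contain, by a union-of-paths/symmetric-difference argument, many independent cycles, forcing $C(G-v)$ to be large enough that replacing $v$ by a tiny modification of the structure around $v_i,v_j$ would already beat $C(G)$. Chasing this refinement through, the critical inequality should become $1.37^{d}>\binom{d}{2}\cdot (\text{mild overhead})$, and the numerical threshold for failure is exactly $d\le 11$, matching the statement.
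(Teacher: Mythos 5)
There is a genuine gap, and it sits exactly where you flagged it: the upper bound on $C_v$. Your replacement graph $G^*=(G-v)\sqcup H$ discards every cycle through $v$ and compensates with a disjoint gadget on at most $d$ edges, so you need $C_v$ to be bounded by a function of $d$ alone. No such bound exists: $C_v=\sum_{\{i,j\}\subset N(v)}p_{ij}$, and the path counts $p_{ij}$ live in $G-v$, which has $m-d$ edges and can contribute exponentially many (in $m$) paths between two fixed neighbours of $v$. Indeed, in any near-extremal $G$ one expects each vertex to lie on a constant fraction of all $\approx 1.37^m$ cycles, so for $d\ll m$ the quantity $C_v$ dwarfs anything a $d$-edge gadget can provide. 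Your fallback --- arguing that large $p_{ij}$'s force $C(G-v)$ to be large and then ``beating'' $C(G)$ by a tiny local modification --- is not an argument yet; it is precisely the missing theorem, and the target inequality you write down, $1.37^{d}>\binom{d}{2}\cdot(\text{mild overhead})$, does not even hold numerically at $d=12$ (where $1.37^{12}\approx 44<66=\binom{12}{2}$), so the threshold $11$ cannot emerge this way.

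The paper avoids this trap by never throwing the cycles through $v$ away. It disconnects $v=u$ from only $6$ of its $k\geq 12$ neighbours (chosen by an averaging argument, Lemma~\ref{lemma:deletethree}, so that at most a $\frac{6(2k-7)}{k(k-1)}$ fraction of the cycles through $u$ is lost), splits the remaining neighbours into four classes (Lemma~\ref{lemma:tripartition}), and spends the six freed edges on a $K_4$ on four new vertices $v_1,\dots,v_4$, each $v_l$ inheriting the edges to its class. Every surviving cycle through $u$ is rerouted through the $K_4$, and --- this is the engine of the proof --- each cycle whose two attachment points lie in different classes now has $5$ reroutings instead of $1$. The gain $4\cdot\frac{3(k-6)^2-4}{4(k-6)(k-7)}\bigl(1-\frac{6(2k-7)}{k(k-1)}\bigr)S$ beats the loss exactly when $k\geq 12$, which is where the constant $11$ comes from. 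So the structural difference is: you replace globally and must bound $C_v$ absolutely, which is impossible; the paper modifies locally and only needs to compare the lost fraction of $C_v$ against a multiplied fraction of $C_v$, so $C_v=S$ cancels out of the inequality entirely.
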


The proof of Theorem \ref{thm:maxdegree} relies on the following two lemmas.

\begin{lemma}\label{lemma:deletethree}
Let $k\geq 6$ be a positive integer. For $1\leq i<j \leq k$, let $w_{i,j}$ be a non-negative real number, and let $S=\sum_{1\leq i<j \leq k}w_{i,j}$. Then there exists a 6-element set $D \subseteq [k]$ such that 
$$\sum_{ \scriptsize \begin{matrix}  1\leq i<j \leq k \\ i \not\in D , j \not\in D \end{matrix} }w_{i,j} \geq \left(1-\frac{6(2k-7)}{k(k-1)}\right)S.$$
\end{lemma}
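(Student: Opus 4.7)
The natural approach is a first-moment (averaging) argument over the choice of $D$. Let $D$ be chosen uniformly at random from $[[k]]^6$, and let
$$R(D) := \sum_{\substack{1 \leq i < j \leq k \\ i\notin D,\, j\notin D}} w_{i,j}$$
be the surviving weight. It suffices to show $\mathbb{E}[R(D)] \geq \bigl(1-\tfrac{6(2k-7)}{k(k-1)}\bigr)S$, since then some particular $D$ achieves at least this expectation.

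By linearity of expectation,
$$\mathbb{E}[R(D)] = \sum_{1\leq i<j\leq k} w_{i,j}\cdot \Pr[i\notin D \text{ and } j\notin D].$$
For any fixed pair $\{i,j\}$, the probability that both $i$ and $j$ avoid $D$ equals the fraction of 6-subsets of $[k]$ contained in $[k]\setminus\{i,j\}$, namely $\binom{k-2}{6}/\binom{k}{6} = \frac{(k-6)(k-7)}{k(k-1)}$. Therefore
$$\mathbb{E}[R(D)] = \frac{(k-6)(k-7)}{k(k-1)}\, S.$$

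It remains to check that $1 - \frac{6(2k-7)}{k(k-1)} = \frac{(k-6)(k-7)}{k(k-1)}$, which is immediate from
$$k(k-1) - (k-6)(k-7) = (k^2-k) - (k^2-13k+42) = 12k - 42 = 6(2k-7).$$
Combining the two displays yields a set $D\in[[k]]^6$ with $R(D)\geq \bigl(1-\tfrac{6(2k-7)}{k(k-1)}\bigr)S$, which is the required conclusion.

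There is essentially no obstacle here beyond the routine identity in the last step; the hypothesis $k\geq 6$ is needed precisely so that $\binom{k}{6}>0$ and the random 6-subset is well-defined, and one should note that the right-hand side of the inequality may be negative when $k$ is small (in which case the statement is trivial because $R(D)\geq 0$), so no further case analysis is required.
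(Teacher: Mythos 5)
Your proof is correct and uses essentially the same averaging argument as the paper: both average the surviving weight over all $\binom{k}{6}$ choices of $D$ and conclude some $D$ meets the mean. The only difference is bookkeeping --- you compute $\mathbb{E}[R(D)]$ directly from the pair-survival probability $\binom{k-2}{6}/\binom{k}{6}=\frac{(k-6)(k-7)}{k(k-1)}$, whereas the paper expands $S(D)$ by inclusion--exclusion before averaging; your version is slightly cleaner (one small quibble: for $k\geq 6$ the bound $\frac{(k-6)(k-7)}{k(k-1)}$ is never negative, so your final remark is unnecessary, though harmless).
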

\begin{proof}
The proof relies on an averaging argument. For each $i \in [k]$ set $w_i=\sum_{j \in [k], j\neq i}w_{i,j}$. Note that $$\sum_{i\in [k]}w_i=2S.$$ Let $X$ be a collection of all 6-element subsets of $[k]$. For $D\in X$ let 
\begin{align*}
S(D)&=\sum_{ \scriptsize \begin{matrix} 1\leq i<j \leq k \\ i \not\in D , j \not\in D \end{matrix} }w_{i,j}\\
&=S-\sum_{i \in D}\left(\sum_{j \in [k], j \neq i}w_{i,j} \right)+\sum_{ i, j \in D, i<j }w_{i,j}\\
&=S-\sum_{i \in D}w_{i}+\sum_{i, j \in D, i< j}w_{i,j}.
\end{align*}
Let $\overline{S(D)}$ be the average of $S(D)$ over all $D\in X$, then 
\begin{align*}
\overline{S(D)}&=\frac{\sum_{ D \in X } \left(S-\sum_{i\in D}w_i+\sum_{i, j \in D, i< j}w_{i,j}\right)}{ \binom{k}{6}}\\
&=S-\frac{\sum_{i \in [k]} \sum_{D \in X, i \in D}w_i}{\binom{k}{6}}+\frac{\sum_{1\leq i<j \leq k} \sum_{D \in X : \, i,j \in D}w_{i,j}}{\binom{k}{6}}\\
&=S-\frac{\sum_{i \in [k]} \binom{k-1}{5} w_i}{\binom{k}{6}}+\frac{\sum_{1\leq i<j \leq k} \binom{k-2}{4} w_{i,j}}{\binom{k}{6}}\\
&=S-\frac{\binom{k-1}{5}\cdot 2S}{\binom{k}{6}}+\frac{\binom{k-2}{4}\cdot S}{\binom{k}{6}}\\
&=\left(1-\frac{6(2k-7)}{k(k-1)}\right)S.\\
\end{align*}
There exists $D \in X$, such that $S(D) \geq \overline{S(D)}$, i.e.
$$\sum_{ \scriptsize \begin{matrix}  1\leq i<j \leq k \\ i \not\in D , j \not\in D \end{matrix} }w_{i,j} \geq \left(1-\frac{6(2k-7)}{k(k-1)}\right)S.$$
\end{proof}

\begin{lemma}\label{lemma:tripartition}
Let $k\geq 2$ be a positive integer. For $1\leq i<j \leq k$, let $w_{i,j}$ be a non-negative real number, and let $S=\sum_{1\leq i<j \leq k}w_{i,j}$. Then there exists a partition $A_1\cup A_2\cup A_3\cup A_4=[k]$, 
such that  

$$\sum_{1 \leq l < m \leq 4} \; \sum_{ \scriptsize \begin{matrix} i \in A_l\\ j \in A_m \end{matrix}}w_{i,j}\geq \left(\frac{3k^2-4}{4k(k-1)}\right) S.$$
\end{lemma}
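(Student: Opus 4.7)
The approach is a standard averaging argument using a random \emph{balanced} ordered partition of $[k]$ into four parts. Fix non-negative integers $a_1,a_2,a_3,a_4$ summing to $k$ with $|a_l-a_m|\leq 1$ for all $l,m$ (so each $a_l\in\{\lfloor k/4\rfloor,\lceil k/4\rceil\}$), and let $\mathcal{P}$ be the collection of ordered partitions $(A_1,A_2,A_3,A_4)$ of $[k]$ with $|A_l|=a_l$. The plan is to draw $(A_1,\ldots,A_4)$ uniformly from $\mathcal{P}$ and show that the expected inter-part weight already meets the required bound; averaging then guarantees a deterministic partition that works.

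By the symmetry of $\mathcal{P}$, for every pair $1\leq i<j\leq k$ the probability that $i$ and $j$ lie in the same part equals
$$p=\frac{\sum_{l=1}^{4}\binom{a_l}{2}}{\binom{k}{2}},$$
independently of the pair. Hence the expected inter-part sum is $(1-p)S$, and it suffices to verify $1-p\geq \frac{3k^2-4}{4k(k-1)}$, which is equivalent to $\sum_l\binom{a_l}{2}\leq (k-2)^2/8$.

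This last inequality I would handle by a brief case analysis on $r:=k\bmod 4$. Writing $k=4q+r$, the balanced sizes give $\sum_l\binom{a_l}{2}$ equal to $k(k-4)/8$, $(k-1)(k-3)/8$, $(k-2)^2/8$, and $(k-1)(k-3)/8$ for $r=0,1,2,3$ respectively, and a one-line expansion shows $k(k-4)\leq (k-1)(k-3)\leq (k-2)^2$. Hence the bound $\sum_l\binom{a_l}{2}\leq (k-2)^2/8$ holds in every residue class, with equality precisely when $k\equiv 2\pmod 4$.

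I do not anticipate any real obstacle; the whole argument rests on the one symmetry calculation together with a tiny arithmetic comparison. A point worth flagging is that the naive alternative of assigning each vertex independently and uniformly to one of four parts gives an expected cut of only $\tfrac{3}{4}S$, which is strictly smaller than the target $\frac{3k^2-4}{4k(k-1)}S$ for every $k\geq 2$ (the gap is $\frac{3k-4}{4k(k-1)}S$). So the correlation introduced by fixing the part sizes is essential, and this also explains the slightly unusual form of the stated constant, which is tight in expectation exactly when $k\equiv 2\pmod 4$.
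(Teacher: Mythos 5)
Your proposal is correct and follows essentially the same route as the paper: both average over uniformly random ordered partitions into four parts of balanced sizes $\lfloor k/4\rfloor$ or $\lceil k/4\rceil$, reduce to bounding the same-part probability $\sum_l\binom{a_l}{2}/\binom{k}{2}$ by $\frac{(k-2)^2}{4k(k-1)}$, and verify this by the identical case analysis on $k\bmod 4$ (your three values $\frac{k(k-4)}{8}$, $\frac{(k-1)(k-3)}{8}$, $\frac{(k-2)^2}{8}$ match the paper's after accounting for the factor $a_l(a_l-1)=2\binom{a_l}{2}$). Your closing remark that independent uniform assignment only yields $\frac34 S$ and therefore falls short is accurate and correctly explains why fixing the part sizes is essential.
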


\begin{proof}
For all $l \in [4]$ let $a_l=\lfloor \frac{k+l-1}{4}\rfloor$ (note that $a_1+a_2+a_3+a_4=k$). Let $X$ be the collection of all  ordered quadruples $(A_1, A_2, A_3, A_4)$, such that $\pi=A_1\cup A_2\cup A_3 \cup A_4$ is a partition of $[k]$ and for all $l\in [4]$, $|A_l|=a_l$. Note that 

$$\displaystyle |X|=\frac{k!}{ a_1! a_2!a_3!a_4!}.$$ For $p=(A_1,A_2,A_3,A_4)\in X$ define 

\begin{align*}
S(p)&=\sum_{1\leq l < m \leq 4} \; \sum_{ \scriptsize \begin{matrix} i \in A_l\\ j \in A_m \end{matrix}}w_{i,j}\\
&=S-\sum_{l \in [4]}\sum_{ \scriptsize \begin{matrix} i < j\\ i,j \in A_l \end{matrix}}w_{i,j}.
\end{align*}

Let $\overline{S(p)}$ be the average of $S(p)$ over all possible choices of $p$.
\begin{align*}
\overline{S(p)}&=\frac{\sum_{p \in X} (S-\sum_{l \in [4]}\sum_{ i,j \in A_l, i<j }w_{i,j})}{|X|}\\
&=S-\frac{\sum_{l \in [4]} \sum_{p \in X}\sum_{ i,j \in A_l, i<j }w_{i,j}}{|X|}\\
&=S-\frac{\sum_{l \in [4]} \sum_{ 1\leq i<j \leq k}\sum_{p \in X:\, i,j \in A_l }w_{i,j}}{|X|}\\
\end{align*}

Note that for any choice of $l \in [4]$ and any choice of $i,j$, such that $1\leq i<j \leq k$  there are exactly 
$$\frac{(k-2)!  (a_l)(a_l-1)}{ a_1! a_2 ! a_3 ! a_4 !} $$
quadruples $p \in X$, such that $i,j \in A_l$. Then, 

\begin{align*}
\overline{S(p)}&=S-(\sum_{l \in [4]} \sum_{ 1\leq i<j \leq k}\frac{(k-2)!  (a_l)(a_l-1)}{ a_1! a_2 ! a_3 ! a_4 !}w_{i,j} )\slash |X|\\
&=S-(\sum_{l \in [4]} \frac{(k-2)!  (a_l)(a_l-1)}{ a_1! a_2 ! a_3 ! a_4 !}\cdot S ) \cdot \frac{1}{|X|}\\
&=S-(\sum_{l \in [4]} \frac{(k-2)!(a_l)(a_l-1)}{ a_1! a_2 ! a_3 ! a_4 !}) \cdot S \cdot \frac{ a_1! a_2 ! a_3 ! a_4 !}{k!}\\
&=S-(\sum_{l \in [4]} \frac{\lfloor\frac{k+l-1}{4}\rfloor (\lfloor\frac{k+l-1}{4}\rfloor-1)}{k(k-1)})\cdot S\\
&=S\left(1-\frac{1}{k(k-1)}
 \cdot \begin{cases} \frac{k(k-4)}{4} , &\text{if } k\equiv 0 \text{ mod } 4 \\
   \frac{(k-1)(k-3)}{4}, &\text{if } k\equiv \pm 1 \text{ mod } 4\\
 \frac{(k-2)^2}{4} , &\text{if } k\equiv 2 \text{ mod } 4 \end{cases} \right)\\
&\geq S\left(1-\frac{(k-2)^2}{4k(k-1)}\right).\\
\end{align*}
There exists a $p=(A_1,A_2,A_3,A_4) \in X$, such that $S(p) \geq \overline{S(p)}$, therefore the partition $A_1 \cup A_2 \cup A_3 \cup A_4$ satisfies the statement of Lemma \ref{lemma:tripartition}.
\end{proof}

\begin{proof}[Proof of Theorem~\ref{thm:maxdegree}]
Let $m$ be a positive integer and $G$ be a graph with $m$ edges. To prove Theorem~\ref{thm:maxdegree}, it is sufficient to prove that if $\Delta(G)\geq 12$, then there is a graph $H$ with $m$ edges and with $C(H)>C(G)$.

Let $\Delta(G)\geq 12$ and $u$ be a vertex of maximal degree in $G$. Let $N(u)=\{u_1, u_2, \dots, u_k\}$ be the neighbourhood of $u$ (note that $k\geq 12$). For $1\leq i< j \leq k$, define $w_{i,j}$ to be the number of paths from the vertex $u_i$ to the vertex $u_j$ in the graph $G-u$. Then the number of cycles in graph $G$ that pass through vertex $u$ is $S=\sum_{1\leq i<j \leq k}w_{i,j}$. By Lemma \ref{lemma:deletethree}, there is a 6-element set $D=\{i_1, i_2, \dots,  i_6 \}$, such that 
\begin{equation}\label{eq:eqshort}
\sum_{ \scriptsize \begin{matrix} 1\leq i<j \leq k \\ i \not\in D , j \not\in D \end{matrix} }w_{i,j} \geq \left(1-\frac{6(2k-7)}{k(k-1)}\right)S.
\end{equation}
Suppose, upon re-indexing, that $D=\{k-5, k-4, \dots, k-1, k\}$. Applying Lemma \ref{lemma:tripartition} to the collection of real numbers $w_{i,j}$ with $1\leq i < j \leq k-6$ gives a partition $A_1\cup A_2 \cup A_3 \cup A_4=[k-6]$ with 
\begin{equation}\label{eq:eqlong}
\sum_{1 \leq l<m \leq 4} \; \sum_{\scriptsize \begin{matrix} i \in A_l\\ j \in A_m \end{matrix}}w_{i,j}\geq \left(\frac{3(k-6)^2-4}{4(k-6)(k-7)}\right)\left(1-\frac{6(2k-7)}{k(k-1)}\right)S.\end{equation}
For $i\in[4]$, let  $U_{i}=\{u_{j} \; : \; j\in A_i\}.$
Construct a graph $H$ by deleting $u$ and all of the edges incident to $u$, adding four new vertices $v_1$, $v_2$ , $v_3$, $v_4$, then for all $1\leq i \leq 4$ adding edges from $v_i$ to each vertex of $U_i$, and for all $1\leq i<j\leq 4$ adding edges $v_iv_j$ (see Figure \ref{fig:GtoH}).
Then $|E(H)|=|E(G)|$. 

\begin{figure}[h]
\begin{center}\begin{tikzpicture}[scale=0.5, line cap=round,line join=round,x=1.0cm,y=1.0cm]
\clip(-6,-1) rectangle (19,7);

\begin{scope}
\draw[line width=0.1pt, color=black, draw opacity=1] (0,3)--(-4,2.5);
\draw[line width=0.1pt, color=black, draw opacity=1] (0,3)--(4,2);
\draw[line width=0.1pt, color=black, draw opacity=1] (0,3)--(4,3);
\draw[line width=0.1pt, color=black, draw opacity=1] (0,3)--(-1.5,0.25);
\draw[line width=0.1pt, color=black, draw opacity=1] (0,3)--(-2.5,0.75);
\draw[line width=0.1pt, color=black, draw opacity=1] (0,3)--(1.5,0.25);
\draw[line width=0.1pt, color=black, draw opacity=1] (0,3)--(2.5,0.75);
\foreach \x in {0,...,5} \draw[line width=0.1pt, color=black, draw opacity=1] (0,3)--(-2.5+\x,5); 

\draw [fill=black] (0.,3.) circle (2pt);
\draw [fill=black] (-4,2.5) circle (2pt);
\draw [fill=black] (4.,2.) circle (2pt);
\draw [fill=black] (4.,3.) circle (2pt);
\draw [fill=black] (-1.5,0.25) circle (2pt);
\draw [fill=black] (-2.5,0.75) circle (2pt);
\draw [fill=black] (1.5,0.25) circle (2pt);
\draw [fill=black] (2.5,0.75) circle (2pt);

\foreach \x in {0,...,5}
\filldraw[black] (-2.5+\x,5) circle (2pt); 

\node [below,black] at (0,2.8) {\large{u}};
\node [below,black] at (-5,7) {\large{G:}};
\node [below,black] at (2.2,0.2) {\large{$U_3$}};
\node [below,black] at (-2.2,0.2) {\large{$U_2$}};
\node [left,black] at (-4.1,2.5) {\large{$U_1$}};
\node [right,black] at (4.1,2.5) {\large{$U_4$}};
\node [above,black] at (0,5.5) {\large{$D$}};

\draw (4,2.5) ellipse (0.2cm and 0.8cm);
\draw (-4,2.5) ellipse (0.2cm and 0.8cm);
\draw (2,0.5) circle [x radius=1cm, y radius=0.2cm, rotate=30];
\draw (-2,0.5) circle [x radius=1cm, y radius=0.2cm, rotate=-30];
\draw (0,5) ellipse (3cm and 0.3cm);

\end{scope}

\begin{scope}[xshift=12cm]
\draw[line width=0.1pt, color=black, draw opacity=1] (-2,3)--(-4,2.5);
\draw[line width=0.1pt, color=black, draw opacity=1] (2,3)--(4,2);
\draw[line width=0.1pt, color=black, draw opacity=1] (2,3)--(4,3);
\draw[line width=0.1pt, color=black, draw opacity=1] (-1,2)--(-1.5,0.25);
\draw[line width=0.1pt, color=black, draw opacity=1] (-1,2)--(-2.5,0.75);
\draw[line width=0.1pt, color=black, draw opacity=1] (1,2)--(1.5,0.25);
\draw[line width=0.1pt, color=black, draw opacity=1] (1,2)--(2.5,0.75);
\draw[line width=0.1pt, color=black, draw opacity=1] (1,2)--(-1,2);
\draw[line width=0.1pt, color=black, draw opacity=1] (2,3)--(-2,3);
\draw[line width=0.1pt, color=black, draw opacity=1] (1,2)--(2,3);
\draw[line width=0.1pt, color=black, draw opacity=1] (-1,2)--(-2,3);

\draw[line width=0.1pt, color=black, draw opacity=1] (1,2)--(-2,3);
\draw[line width=0.1pt, color=black, draw opacity=1] (-1,2)--(2,3);

\draw [fill=black] (-2.,3.) circle (2pt);
\draw [fill=black] (2.,3.) circle (2pt);
\draw [fill=black] (-1.,2.) circle (2pt);
\draw [fill=black] (1.,2.) circle (2pt);
\draw [fill=black] (-4,2.5) circle (2pt);
\draw [fill=black] (4.,2.) circle (2pt);
\draw [fill=black] (4.,3.) circle (2pt);
\draw [fill=black] (-1.5,0.25) circle (2pt);
\draw [fill=black] (-2.5,0.75) circle (2pt);
\draw [fill=black] (1.5,0.25) circle (2pt);
\draw [fill=black] (2.5,0.75) circle (2pt);

\foreach \x in {0,...,5}
\filldraw[black] (-2.5+\x,5) circle (2pt); 

\node [above,black] at (-2,3) {\large{$v_1$}};
\node [above,black] at (2,3) {\large{$v_4$}};
\node [left,black] at (-1.1,2) {\large{$v_2$}};
\node [right,black] at (1.1,2) {\large{$v_3$}};
\node [below,black] at (2.2,0.2) {\large{$U_3$}};
\node [below,black] at (-5,7) {\large{H:}};
\node [below,black] at (-2.2,0.2) {\large{$U_2$}};
\node [left,black] at (-4.1,2.5) {\large{$U_1$}};
\node [right,black] at (4.1,2.5) {\large{$U_4$}};
\node [above,black] at (0,5.5) {\large{$D$}};

\draw (4,2.5) ellipse (0.2cm and 0.8cm);
\draw (-4,2.5) ellipse (0.2cm and 0.8cm);
\draw (2,0.5) circle [x radius=1cm, y radius=0.2cm, rotate=30];
\draw (-2,0.5) circle [x radius=1cm, y radius=0.2cm, rotate=-30];
\draw (0,5) ellipse (3cm and 0.3cm);

\end{scope}
\end{tikzpicture}
\caption{Constructing graph $H$ from $G$.}\label{fig:GtoH}
\end{center}
\end{figure}
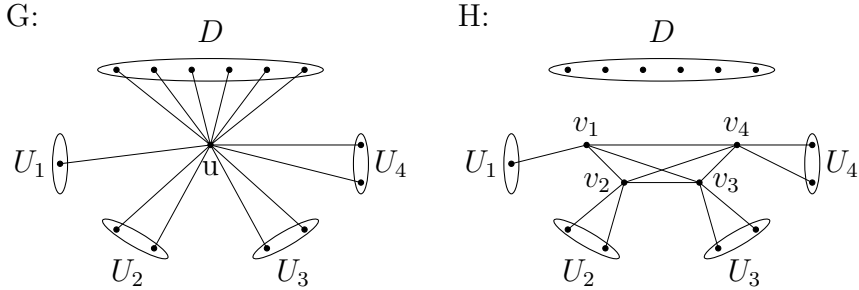

To count the number of cycles in $H$, note the following:
\begin{itemize}
\item Every cycle in $G$ that does not pass through the vertex $u$ is still a cycle in $H$. There are $C(G)-S$ such cycles. 
\item Let $C$ be a cycle in $G$ that for some $1\leq i<j \leq k-6$ contains a path $u_{i}uu_j$. If for some $l \in [4]$ $u_i$ and $u_j$ are in the same class $U_l$, then $C$ corresponds to the cycle in $H$ that uses the path $u_{i}v_lu_j$ instead of $u_iuu_j$. In the case if $u_i\in U_l$ and $u_j \in U_m$ for some $1\leq l<m \leq 4$, cycle $C$ corresponds to the cycle that uses the path $u_{i}v_lv_mu_j$ instead of $u_iuu_j$. By (\ref{eq:eqshort}), there are at least  $$\left(1-\frac{6(2k-7)}{k(k-1)}\right)S$$ cycles in $G$ that use path $u_iuu_j$ with $u_i, u_j \in N(u)\backslash D$.  
\item Every cycle in $G$ that for some $i\in A_l$ and $j \in A_m$ with $l\neq m$ contains a path $u_{i}uu_j$ gives rise to additional 4 cycles in $H$(except the one containing $u_{i}v_lv_mu_j$). For example, if $l=1$, $m=2$ the four new cycles contain paths $u_{i}v_1v_3v_2u_j$, $u_{i}v_1v_4v_2u_j$, $u_{i}v_1v_3v_4v_2u_j$ and 
$u_{i}v_1v_4v_3v_2u_j$ instead of $u_iuu_j$. According to (\ref{eq:eqlong}), there are at least $$\left(\frac{3(k-6)^2-4}{4(k-6)(k-7)}\right)\left(1-\frac{6(2k-7)}{k(k-1)}\right)S=\left(\frac{3k^2-36k+104}{4k(k-1)} \right)S$$
cycles in $G$ that for some $i\in A_l$ and $j \in A_m$ with $l\neq m$ pass through a path $u_{i}uu_j$.
\item There are 7 new cycles in $H$ spanned by the vertices $v_1, v_2, v_3, v_4$.
\end{itemize}
By all of the observations above, the number of cycles in $H$ is 
\begin{align*}
C(H)&\geq C(G)-S+\left(1-\frac{6(2k-7)}{k(k-1)}\right)S+4\left(\frac{3k^2-48k+104}{4k(k-1)} \right)S+7\\
&=C(G)+7+S\left(\frac{3(k-4)(k-12)}{k(k-1)}\right)\\
&>C(G).
\end{align*}
Therefore, $H$ has more cycles than $G$. 
\end{proof}

Note, that for $m=7$ the graphs that have the most cycles are $K_4$ plus an edge and $K_4$ with one edge replaced by a path of length two. In the first case minimum degree is one, in the second case -- two.\\
The authors can also prove the following theorem (that does not have direct relation to the main results of this paper).
\begin{theorem}\label{thm:mindegree1}
If $m>7$ and $G$ is a connected graph with $C(G)=C(m)$, then $\delta(G) \geq 3$.
\end{theorem}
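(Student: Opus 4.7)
I would argue by contradiction in the spirit of the proof of Theorem~\ref{thm:maxdegree}: assume $G$ is a connected graph on $n$ vertices with $m>7$ edges, $C(G)=C(m)$, and a vertex $v$ of degree $1$ or $2$; produce a graph $H$ with $m$ edges and $C(H)>C(G)$.

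\emph{Degree-one case.} If $\deg(v)=1$, the unique pendant edge lies on no cycle, so $C(G-v)=C(G)$ and $G-v$ is connected with $m-1\geq 7$ edges. If $G-v$ has any non-edge $xy$, set $H=(G-v)+xy$; adding a chord to a connected graph creates at least one new cycle, so $C(H)\geq C(G)+1$. The only alternative is $G-v=K_{n-1}$ with $n-1\geq 5$ (so $G$ is $K_{n-1}$ plus a pendant), which I would handle by exhibiting an explicit competitor: delete two edges of $K_{n-1}$ and attach a new vertex $w$ by three edges to $K_{n-1}$; a direct count for the smallest case $n-1=5$ already gives $38>37=C(K_5)$ cycles, and the same template should generalise.

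\emph{Degree-two case.} If $\deg(v)=2$ with neighbours $u_1,u_2$, cycles of $G$ through $v$ correspond bijectively to $u_1$-to-$u_2$ paths in $G-v$; write $P$ for their number. When $u_1u_2\notin E(G)$, form $H$ by removing $v$, adding the edge $u_1u_2$ (which creates exactly $P$ new cycles and restores $C(G)$), and adding one further non-edge (always available because $G-v$ has only $m-2$ edges on at least $4$ vertices); the extra chord adds at least one more cycle, so $C(H)>C(G)$. When $u_1u_2\in E(G)$, so $vu_1u_2$ is a triangle, write $c$ for the number of cycles of $G-v$ through $u_1u_2$, so $P=c+1$. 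I would let $H$ be obtained by deleting $v$ and attaching a new vertex $w$ by edges $wx,wy$, where $xy$ is an edge of $G-v$ on which the maximum number $k$ of cycles of $G-v$ lie. Then $C(H)=C(G-v)+k+1=C(G)+(k-c)$, since $w$ contributes the triangle $wxy$ together with one replacement cycle per cycle of $G-v$ through $xy$, so it remains to show $k>c$.

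The main obstacle is the triangle subcase, proving the strict inequality $k>c$. An averaging inequality $\sum_{e\in E(G-v)}(\text{cycles through }e)\geq 3\cdot C(G-v)$ (each cycle has length at least $3$) yields $k\geq 3C(G-v)/(m-2)$; combined with the exponential lower bound $C(G)\geq 1.37^m$ from Section~\ref{sec:example} and the bound $\Delta(G)\leq 11$ from Theorem~\ref{thm:maxdegree} (which restricts how many cycles can cluster through any single edge and thereby bounds $c$), this should force $k>c$ for all sufficiently large $m$. The remaining degenerate configuration $G-v=K_{n-1}$ (where no second non-edge is available in either subcase) is closed off by explicit modifications of $K_{n-1}$ analogous to the degree-one subcase, and the finitely many small-$m$ exceptions are checked directly.
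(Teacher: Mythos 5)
The paper states this theorem without giving a proof (``The authors can also prove the following theorem\dots''), so there is nothing to compare your argument against; it has to stand on its own, and as written it does not. Your degree-one case and your degree-two case with $u_1u_2\notin E(G)$ are essentially sound (modulo the $K_{n-1}$-type degeneracies, and note that your parenthetical ``always available because $G-v$ has only $m-2$ edges on at least $4$ vertices'' is not a correct reason for a non-edge to exist --- the genuine exception is $(G-v)+u_1u_2$ being complete, which you only wave at). The real gap is the triangle subcase, which you correctly identify as the main obstacle but do not close. You need $k>c$, i.e.\ that the edge $u_1u_2$ is \emph{not} a maximizer of ``number of cycles through an edge'' in $G-v$; the averaging bound only gives $k\geq 3C(G-v)/(m-2)$, so you would need the complementary estimate $c<3C(G-v)/(m-2)$, and neither $\Delta(G)\leq 11$ nor $C(G)\geq 1.37^m$ delivers it. Bounded maximum degree does not prevent a single edge from carrying far more than the average number of cycles (the number of cycles through $u_1u_2$ is the number of $u_1$--$u_2$ paths of length at least $2$, which Lemma~\ref{l:completecycle} only bounds by roughly $3^{m/3}$, far above $3C(G-v)/(m-2)\approx 3\cdot 1.37^m/m$), so there is no contradiction available from these ingredients, and nothing in your setup rules out $u_1u_2$ being exactly the best edge, in which case your construction gains zero cycles. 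A further structural problem: both the $1.37^m$ lower bound and your ``sufficiently large $m$'' conclusion are asymptotic, while the theorem claims all $m>7$; ``the finitely many small-$m$ exceptions are checked directly'' is vacuous without an explicit threshold, and likewise ``the same template should generalise'' for the $K_{n-1}$-plus-pendant competitor is an assertion, not a proof (the gain and loss there are both of order $C(K_{n-1})/n$, so the comparison is genuinely close and needs a computation). To repair the triangle subcase you need a different local surgery --- one whose gain does not hinge on $u_1u_2$ being a strictly sub-maximal edge --- or an argument that in an extremal graph the two neighbours of a degree-two vertex cannot be adjacent.
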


\section{Cycles in graphs or multigraphs with fixed number of vertices and edges}\label{sec:mainthm}
Multigraphs are defined as in \cite{bollobas}. The degree $deg_G(V)$ of a vertex $v\in V(G)$ is the number of edges incident to $v$. For two vertices $u,v\in V(G)$, denote by $E(u,v)$ the set of all edges between $u$ and $v$. For a vertex $v\in V(G)$, denote by $N(v)$ the set of all vertices connected with $v$ by at least one edge. A cycle in a multigraph $G$ is a set of $k\geq 2$ distinct vertices and $k$ distinct edges $\{v_1,e_1,v_2,e_2,\dots ,e_k,v_1\}$, where for each $i \in [k]$, $v_i\in V(G), e_i\in E(G)$ and any consecutive vertex and edge are incident. As in the case of simple graphs, denote the number of cycles in a multigraph $G$ by $C(G)$. No loop can be a part of a cycle, hence only multigraphs without loops are considered.\\
The main result of this section is an upper bound for number of cycles in a graph (or multigraph) with fixed number of vertices and edges.
\begin{theorem}\label{thm:newbound}
Let $G$ be a multigraph with $n\geq 2$ vertices and $m$ edges.\\
If $\frac{m}{n-1}<3$, then
$$ C(G)<\frac34 \Delta (G) \cdot (\sqrt[3]{3})^m.$$
If $\frac{m}{n-1}\geq 3$, and $\lfloor\frac{m}{n-1}\rfloor=s$, $\alpha=\frac{m}{n-1}-s$, then
$$C(G)<\frac34\Delta(G)(s^{1-\alpha}(s+1)^{\alpha})^{n-1}=\frac34\Delta(G)((s^{1-\alpha}(s+1)^{\alpha})^{\frac{1}{s+\alpha}})^m.$$
\end{theorem}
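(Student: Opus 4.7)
The plan is to prove Theorem~\ref{thm:newbound} by induction on $n$. For the base case $n=2$, the multigraph $G$ consists of $m$ parallel edges between two vertices, so $C(G)=\binom{m}{2}$ with $\Delta(G)=m$, and the claimed bound can be verified directly: for $m\geq 3$ the inequality becomes $\binom{m}{2}<\frac{3m^2}{4}$, while for $m<3$ the bound $\frac{3}{4}m\cdot(\sqrt[3]{3})^{m}$ comfortably dominates $\binom{m}{2}$.

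For the inductive step, I would choose a vertex $v$ (likely of minimum degree, so its removal affects the degree profile least unfavourably) and decompose $C(G)=C(G-v)+c(v)$, where $c(v)$ counts cycles passing through $v$. The term $C(G-v)$ is immediately handled by the inductive hypothesis on $G-v$, which has $n-1$ vertices and $m-d(v)$ edges. The non-trivial contribution is $c(v)$: each cycle through $v$ consists of two distinct edges $e_1,e_2$ incident to $v$, together with a simple path in $G-v$ joining their far endpoints $u_1,u_2$. Consequently $c(v)$ equals $\sum P(u_1,u_2;\,G-v)$, summed over unordered pairs of distinct edges at $v$.

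The main work is therefore a path-counting bound in $G-v$ compatible with the claimed cycle bound. I would introduce an auxiliary lemma bounding the weighted sum $c(v)$ above by a quantity of the same multiplicative shape as the target. Applying the concavity of $\log$ to the per-vertex ``local contributions'' (two edge-choices at each vertex traversed) should yield the weighted geometric mean $s^{1-\alpha}(s+1)^{\alpha}$ of the two consecutive integers bracketing $m/(n-1)$, since $m/(n-1)$ need not itself be integral. This is where weighted AM-GM (equivalently Jensen's inequality on a strictly concave function) is natural, and it explains why the bound is expressed in terms of this particular mean.

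Finally, the two contributions $C(G-v)$ and $c(v)$ must combine into the asserted bound; this pins down the $\frac{3}{4}\Delta(G)$ prefactor and the threshold $m/(n-1)=3$ where the two regimes meet (note $\sqrt[3]{3}=s^{1/s}$ precisely at $s=3$, so the two cases agree at the boundary). I expect the hardest step to be the path-counting lemma together with making the weighted AM-GM pass smoothly through the non-integrality of $m/(n-1)$; the degree-sequence convexity is standard, but obtaining the exact form $s^{1-\alpha}(s+1)^{\alpha}$ and the constant $\frac{3}{4}$ will require careful tuning, possibly via a slightly stronger inductive hypothesis that replaces the crude factor $\Delta(G)$ with a finer vertex-dependent quantity before being relaxed at the end.
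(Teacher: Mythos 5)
Your overall skeleton matches the paper's: induction on $n$, the same base case, the split $C(G)=C(G-v)+c(v)$, the factor $\deg(v)/2$ from the two edges of a cycle at $v$, and an AM--GM-type optimization producing $s^{1-\alpha}(s+1)^{\alpha}$. However, there are two concrete problems. First, the choice of $v$: you propose deleting a \emph{minimum}-degree vertex, but the induction then fails to close, because the edge density of $G-v$ can exceed that of $G$. If $v$ is a pendant vertex attached to a dense graph, $G-v$ has $m-1$ edges on $n-1$ vertices and $\frac{m-1}{n-2}>\frac{m}{n-1}$ whenever $m>n-1$, so the inductive hypothesis yields a bound with a strictly larger base $s'^{1-\alpha'}(s'+1)^{\alpha'}$ that cannot be absorbed into the claimed bound for $G$. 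The paper deletes a \emph{maximum}-degree vertex precisely because $\Delta(G)\ge 2m/n$ guarantees $\frac{m-\Delta(G)}{n-2}\le\frac{m}{n-1}$ (and $\Delta(G-v)\le\Delta(G)$), so the inductive bound for $G-v$ is in terms of the same base; the final combination $\frac{\Delta}{2}X^{n-1}+\frac34\Delta X^{n-2}\le\frac34\Delta X^{n-1}$ then needs exactly $X\ge 3$, which is where the threshold $\frac{m}{n-1}=3$ comes from, as you correctly observed.

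Second, the path-counting lemma is the actual content of the theorem and your sketch omits its key mechanism. The paper (Lemmas~\ref{l:completecycle} and~\ref{l:maxdeg}) bounds the number of cycles through an edge $v_1v_2$ by a product $f_2\cdots f_t$ where $f_i=\deg_{G-\{v_2,\dots,v_{i-1}\}}(v_i)$ is the degree of $v_i$ after deleting the earlier interior vertices of the path; the decisive observation is that the edge sets counted by the $f_i$ are mutually \emph{disjoint}, so $f_2+\dots+f_t\le m$ while $t\le n$. This reduces everything to maximizing a product of at most $n-1$ positive integers with sum at most $m$ -- an integer optimization whose solution is ``parts as equal as possible,'' giving $(s^{1-\alpha}(s+1)^{\alpha})^{n-1}$ when $\frac{m}{n-1}\ge 3$ and $3^{m/3}$ (fewer than $n-1$ parts, all equal to $3$) when $\frac{m}{n-1}<3$. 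A generic appeal to Jensen applied to ``per-vertex local contributions'' does not by itself produce the constraint $\sum f_i\le m$, nor the case split in which the optimum uses fewer than $n-1$ factors; without the disjointness argument the bound you would get is in terms of actual vertex degrees, which is not strong enough. Your proposal is a reasonable plan, but these two points are where the proof actually lives.
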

To prove Theorem \ref{thm:newbound}, some notations and lemmas are needed.\\
Let $G$ be a multigraph with $n$ vertices. For vertices $v_1,\dots,v_k\in V(G)$, define  $F(v_1,v_2,\dots,v_k)=N(v_k)\backslash\{v_1,\dots,v_{k-1}\}$ and define\\ $f(v_1,\dots,v_k)=\max\{deg_{G-\{v_2,\dots,v_{k-1}\}}(v_k),1\}$. Denote the number of cycles in $G$ that contain the path $v_1e_1v_2\ldots e_{k-1}v_k$ by $C(v_1e_1v_2\ldots e_{k-1}v_k)$ (note that $C(v_1)$ is a number of cycles containing the vertex $v_1$). For brevity, write $F_k=F(v_1,\dots,v_k)$, $f_k=f(v_1,\dots,v_k)$, $C_k=C(v_1e_1\dots e_{k-1}v_k)$.
\begin{lemma}\label{l:completecycle}
Let $G$ be a multigraph with $n\geq 2$ vertices, $k\in [n-1]$, and $v_1 e_1 v_2 e_2 \dots v_{k-1}$ be a path in $G$. If $F_k\neq\emptyset$, then
\[C_k\leq f_k\cdot\max_{\substack{\\k+1\leq t\leq n\\v_{k+1}\in F_k\\\begin{turn}{90}\dots\end{turn}\\v_t\in F_{t-1}}} \{f_{k+1}\cdot f_{k+2} \cdot \ldots \cdot f_t\}.\]
\end{lemma}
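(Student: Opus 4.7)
The plan is to prove the bound by downward induction on $k$, starting from $k=n-1$ and descending. Write $M_k$ for the maximum product $f_{k+1}\cdots f_t$ appearing in the statement, so the goal is $C_k\le f_k\cdot M_k$ whenever $F_k\ne\emptyset$. The key structural identity will be $f_k=|E(v_k,v_1)|+\sum_{v_{k+1}\in F_k}|E(v_k,v_{k+1})|$, valid whenever $\deg_{G-\{v_2,\dots,v_{k-1}\}}(v_k)\ge 1$; in the degenerate case where this degree is zero, $v_k$ has no edge outside the interior of the path, hence $C_k=0$ and the bound is trivial.

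For the base case $k=n-1$, the only legal extension uses $t=n$ with $v_n\in F_{n-1}\subseteq\{v_n\}$. Every cycle through $v_1\dots v_{n-1}$ either closes at $v_{n-1}$ via an edge to $v_1$ (giving $|E(v_{n-1},v_1)|$ cycles) or extends through $v_n$ and returns to $v_1$ (giving $|E(v_{n-1},v_n)|\cdot|E(v_n,v_1)|$ cycles). A direct calculation using $f_{n-1}=|E(v_{n-1},v_1)|+|E(v_{n-1},v_n)|$ and $f_n\ge\max(|E(v_n,v_1)|,1)$ then confirms $C_{n-1}\le f_{n-1}\cdot f_n$.

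For the inductive step, fix a path $v_1e_1\dots e_{k-1}v_k$ with $F_k\ne\emptyset$ and classify cycles through it by what they do at $v_k$: a cycle either closes with a $v_k v_1$ edge (contributing $|E(v_k,v_1)|$ in total) or extends through some $v_{k+1}\in F_k$ along an edge $e_k\in E(v_k,v_{k+1})$, in which case the count is $|E(v_k,v_{k+1})|\cdot C_{k+1}(v_{k+1})$. The induction hypothesis applied to the extended path gives $C_{k+1}(v_{k+1})\le f_{k+1}(v_{k+1})\cdot M_{k+1}(v_{k+1})$ when $F_{k+1}(v_{k+1})\ne\emptyset$, and the trivial bound $C_{k+1}(v_{k+1})=|E(v_{k+1},v_1)|\le f_{k+1}(v_{k+1})$ otherwise; either way the quantity $f_{k+1}(v_{k+1})\cdot\max(1,M_{k+1}(v_{k+1}))$ is bounded by $M_k$, since it equals the product of an extension appearing in the definition of $M_k$. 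Pulling the max over $v_{k+1}\in F_k$ outside the sum and applying the edge-count identity gives $C_k\le|E(v_k,v_1)|+(f_k-|E(v_k,v_1)|)\cdot M_k$, which is at most $f_k\cdot M_k$ because $M_k\ge 1$ (as each $f_{k+1}\ge 1$).

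The main subtlety is tracking the edge case where $F_{k+1}(v_{k+1})=\emptyset$ for the $v_{k+1}$ realizing the max: the induction hypothesis does not apply directly, and one must instead observe that the single-step extension $v_{k+1}$ alone is a valid extension contributing $f_{k+1}(v_{k+1})$ to the max defining $M_k$, so the resulting bound is still absorbed. Aside from this and the degenerate degree-zero case, the argument is essentially bookkeeping: separating the "closing" and "extending" contributions at $v_k$, pulling the maximum outside the sum over $v_{k+1}\in F_k$, and using the edge-count identity to align the coefficient of $M_k$ with $f_k$.
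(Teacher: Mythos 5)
Your proposal is correct and follows essentially the same route as the paper: induction on the distance $n-k$ (your downward induction on $k$ is the same thing), splitting the cycles through the path at $v_k$ into those closing via a $v_kv_1$ edge and those extending to some $v_{k+1}\in F_k$, bounding the latter by the induction hypothesis (with the trivial bound $C_{k+1}\le f_{k+1}$ when $F_{k+1}=\emptyset$, exactly as in the paper), and absorbing the closing term using $M_k\ge 1$ via $s+(f_k-s)M_k\le f_kM_k$. Your explicit handling of the degenerate degree-zero case and of the $F_{k+1}=\emptyset$ branch is slightly more careful than the paper's write-up, but the argument is the same.
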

\begin{proof}
Fix $n\geq 2$. Let $G$ be a multigraph on $n$ vertices. The proof is by mathematical induction on $l=n-k$.\\
\textit{Base case.}
Let $l=1$. Let $v_1 e_1 \dots v_{n-1}$ be a path in $G$; $C_{n-1}$ is to be bounded.\\
The condition $F_{n-1}\neq\emptyset$ means that $F_{n-1}=\{v_n\}$ and it remains to be proved that $C_{n-1}\leq f_{n-1}f_n$. Let $s$ be the number of edges between $v_{n-1}$ and $v_1$. Then $C_{n-1}\leq s+(f_{n-1}-s)f_n$. By definition, $f_n\geq 1$; therefore $s+(f_{n-1}-s)f_n\leq sf_n+(f_{n-1}-s)f_n=f_{n-1}f_n$, which proves the base case.\\
\textit{Inductive step.}
Let $i\in [n-1]$. Assume that the statement of the lemma holds for $l=i$, and prove it for $l=i+1$, i.e. let $v_1 e_1\dots v_{n-i-1}$ be a path in $G$, and $C_{n-i-1}=C(v_1e_1\dots e_{n-i-2}v_{n-i-1})$ is to be bounded.\\
Let $s$ be the number of edges between $v_{n-i-1}$ and $v_1$. Then
$$ C_{n-i-1}= s+\sum_{\mathclap{ \substack{v_{n-i}\in F_{n-i}\\ e_{n-i-1}\in E(v_{n-i},v_{n-i-1})  }}} \hspace{2mm} C (v_1e_1\dots v_{n-i-1}e_{n-i-1}v_{n-i}).$$

For all possible choices of $v_{n-i}$ and $e_{n-i-1}$, according to inductive hypothesis, 
\begin{align*}
C (v_1e_1\dots e_{n-i-1}v_{n-i})&\leq \begin{cases} f_{n-i}\max_{\substack{\\n-i+1\leq t\leq n\\v_{n-i+1}\in F_{n-i}\\\begin{turn}{90}\dots\end{turn}\\v_t\in F_{t-1}}} \{f_{n-i+1}\dots f_t\}, \text{ if }F_{n-i}\neq \emptyset \\ f_{n-i}, \text{ if } F_{n-i}=\emptyset \end{cases}\\
&\leq \max_{\substack{\\n-i\leq t\leq n\\v_{n-i}\in F_{n-i-1}\\\begin{turn}{90}\dots\end{turn}\\v_t\in F_{t-1}}} \{f_{n-i}\dots f_t\}.
\end{align*}
Therefore, 
\begin{align*}
C_{n-i-1}& \leq s+(f_{n-i-1}-s)\cdot\max_{\substack{\\n-i\leq t\leq n\\v_{n-i}\in F_{n-i-1}\\\begin{turn}{90}\dots\end{turn}\\v_t\in F_{t-1}}} \{f_{n-i}\dots f_t\}\\
&\leq f_{n-i-1}\cdot\max_{\substack{\\n-i\leq t\leq n\\v_{n-i}\in F_{n-i-1}\\\begin{turn}{90}\dots\end{turn}\\v_t\in F_{t-1}}} \{f_{n-i}\dots f_t\}.
\end{align*}
This proves that the statement of the lemma holds for $l=i+1$, and therefore by induction it holds for all $l\in [n-1]$.
\end{proof}

\begin{lemma}\label{l:maxdeg}
Let $G$ be a multigraph with $n\geq 3$ vertices and $m$ edges, and let $v_1$ be a vertex in $G$ of degree $\Delta (G)$.\\
If $\frac{m}{n-1}\geq 3$, and $\lfloor\frac{m}{n-1}\rfloor=s$, $\frac{m}{n-1}-s=\alpha$,  then there are at most\\ $\frac{\Delta(G)}{2}(s^{1-\alpha}(s+1)^{\alpha})^{n-1}$ cycles in $G$ that contain $v_1$.\\
If $\frac{m}{n-1}<3$, then there are at most $\frac{\Delta (G)}{2}\cdot(\sqrt[3]{3})^m$ cycles in $G$ that contain $v_1$.
\end{lemma}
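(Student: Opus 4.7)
The plan is to bound cycles through $v_1$ by reducing to Lemma \ref{l:completecycle} and then optimizing a product subject to an edge-count constraint. First, each cycle through $v_1$ uses exactly two of the $\Delta(G)$ edges incident to $v_1$, so summing $C(v_1 e_1 v_2)$ over all edges $e_1$ at $v_1$ (with $v_2$ the other endpoint of $e_1$) double-counts each cycle through $v_1$, yielding
$$ 2\, C(v_1) \;\leq\; \Delta(G)\cdot\max_{e_1} C(v_1 e_1 v_2). $$
Applying Lemma \ref{l:completecycle} at level $k=2$ further bounds $C(v_1 e_1 v_2)$ by $\max\prod_{k=2}^t f_k$, where the maximum ranges over $t\in\{2,\ldots,n\}$ and valid extensions $v_3\in F_2,\,v_4\in F_3,\,\ldots,\,v_t\in F_{t-1}$.

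The central combinatorial input is the edge-count inequality $\sum_{k=2}^t f_k \leq m$. This holds because $f_k$ agrees (up to the ``max with $1$'' clause) with $\deg_{G-\{v_2,\ldots,v_{k-1}\}}(v_k)$, which counts only edges from $v_k$ to vertices outside $\{v_2,\ldots,v_{k-1}\}$, and therefore every edge of $G$ contributes to at most one term. The lemma now reduces to the arithmetic problem of maximizing $\prod_{k=2}^t f_k$ subject to $\sum_k f_k \leq m$, $f_k\in\mathbb{Z}_{\geq 1}$, and $t-1\leq n-1$.

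For the low-density regime $m/(n-1)<3$, I would invoke the elementary inequality $f^{1/f}\leq 3^{1/3}$ (valid for every integer $f\geq 1$, with equality at $f=3$) to deduce
$$ \prod_{k=2}^t f_k \;\leq\; 3^{(\sum_k f_k)/3} \;\leq\; 3^{m/3} \;=\; (\sqrt[3]{3})^m, $$
which is exactly the asserted bound. For the high-density regime $m/(n-1)\geq 3$, an integer AM-GM argument shows that the maximum product of any $k\leq n-1$ positive integers summing to at most $m$ is attained at $k=n-1$ with $\alpha(n-1)=m-s(n-1)$ of the factors equal to $s+1$ and the remaining $(1-\alpha)(n-1)$ equal to $s$, giving precisely $(s^{1-\alpha}(s+1)^\alpha)^{n-1}$.

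The main subtlety will be the ``max with $1$'' clause in the definition of $f_k$: this can inflate $f_k$ above its actual degree only when $\deg_{G-\{v_2,\ldots,v_{k-1}\}}(v_k)=0$, but in that case $F_k=\emptyset$ and the path must terminate, so at most one extraneous ``$+1$'' enters $\sum_k f_k$ and is easily absorbed. A secondary technical point is justifying the reduction to $k=n-1$ in the high-density case, i.e., verifying that the maximum of $\prod g_i$ over positive integers $g_i$ summing to at most $m$ is non-decreasing in the number of factors throughout the range $k\leq m/3$; this follows from the splitting step $a\mapsto (2,a-2)$ whenever some factor satisfies $a\geq 4$, combined with the slack $3k\leq 3(n-1)\leq m$ available in the remaining case in which every factor is at most $3$.
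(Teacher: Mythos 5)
Your proposal is correct and follows essentially the same route as the paper's proof: halve over the two edges of each cycle at $v_1$, apply Lemma~\ref{l:completecycle}, use disjointness of the edge sets to get $\sum_k f_k\le m$, and then optimize the integer product subject to that sum and to having at most $n-1$ factors. The only (cosmetic) difference is that in the sparse case you replace the paper's case analysis of optimal multisets by the one-line inequality $f\le 3^{f/3}$; your handling of the $\max\{\cdot,1\}$ clause and of the reduction to $t=n$ in the dense case matches the paper's exchange argument in substance.
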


\begin{proof}
Let $G$ be a multigraph with $n\geq 3$ vertices and $m$ edges, and $v_1$ be a vertex with degree $\Delta(G)$.\\
For any edge $e=v_1v_2$ incident to $v_1$, by Lemma \ref{l:completecycle}, the number of cycles that contain $e$ is at most 
$$f_2 \cdot \max_{\substack{\\3\leq t\leq n\\v_3\in F_2\\\begin{turn}{90}\dots\end{turn}\\v_t\in F_{t-1}}} \{f_3\dots f_t\} \leq \max_{\substack{\\2\leq t\leq n\\v_2\in F_1\\\begin{turn}{90}\dots\end{turn}\\v_t\in F_{t-1}}} \{f_2\dots f_t\}.$$
Every cycle through $v_1$ contains two such edges, therefore the number of cycles that contain $v_1$ is at most
\begin{equation}\label{eq:f}\frac{\Delta}{2}\cdot \max_{\substack{\\2\leq t\leq n\\v_2\in F_1\\\begin{turn}{90}\dots\end{turn}\\v_t\in F_{t-1}}} \{f_2\dots f_t\}\end{equation}
Let $v_2,\dots v_t$ be a collection of vertices that give the maximum in (\ref{eq:f}) with the smallest possible $t$. Then $f_t\geq 2$ (otherwise remove all $f_i=1$ after the last $f_k\geq 2$ to obtain the smaller collection of vertices that gives maximum in (\ref{eq:f})). Then for all $2 \leq i \leq t$,
$$f_i=deg_{G-\{v_2,\dots,v_{i-1}\}}(v_i).$$ 
For $2\leq i\leq t$, all the edge sets $\{v_iu\in E(G): u\in V(G)\backslash\{v_2,\dots,v_i\}\}$ are mutually disjoint, so $f_2+\dots+f_t\leq m$. Therefore, 
$$\frac{\Delta}{2} f_2 \cdot \hdots \cdot f_t \leq \frac{\Delta}{2}\cdot\max_{\substack{\\2\leq t\leq n\\x_2+\ldots+x_t\leq m,\\\forall i\in[2,t], x_i \in \mathbb{Z}^+} } \{ x_2 \cdot x_3 \cdot \hdots \cdot x_t\}.$$
So the number of cycles in $G$ that contain $v_1$ is at most
\begin{equation}\label{eq:x}\frac{\Delta}{2}\cdot\max_{\substack{\\2\leq t\leq n\\x_2+\ldots+x_t\leq m,\\\forall i\in[2,t], x_i \in \mathbb{Z}^+} } \{ x_2 \cdot x_3 \cdot \hdots \cdot x_t\}. \end{equation}
For a fixed $t$ the product $x_2\dots x_t$ in (\ref{eq:x}) obtains its maximum when $x_i$s ($i\geq 2$) are as equal as possible (for all $i,j$ $|x_i-x_j|\leq 1$), and their sum is equal to $m$. Let $\lfloor\frac{m}{n-1}\rfloor=s$, $\frac{m}{n-1}=s+\alpha$.\\
If $s\geq 3$ (which is equivalent to $\frac{m}{n-1}\geq 3$), let the maximum in (\ref{eq:x}) be achieved for some $t\leq n$ and let $x_2, \cdots, x_t$ be a collection of $x_i$s that gives the maximum in (\ref{eq:x}). If $t<n$, then $s\geq 3$ implies that either for some $i\in[t]$, $x_{i}\geq 5$, or for two different $i,j\in[t]$, $x_i=x_j=4$. In the first case replacing $x_i$ by $x_i-2$ and setting $x_{t+1}=2$ gives a collection of $x_i$s with a bigger product. In the second case setting $x_i=x_j=3$ and $x_{t+1}=2$ increases the product of $x_i$s. Hence, the maximum in (\ref{eq:x}) is achieved when $t=n$.
For all $2\leq i \leq n$, $x_i=s$ or $x_i=s+1$. Then the number of cycles in $G$ that pass through $v_1$ is at most   
\[\frac{\Delta}{2} x_2\dots x_n =  \frac{\Delta}{2} s^{(1-\alpha)(n-1)}(s+1)^{\alpha (n-1)}=\frac{\Delta}{2}(s^{1-\alpha}(s+1)^{\alpha})^{n-1}.\]

If $s<3$, let the maximum of (\ref{eq:x}) be achieved for some $2\leq t\leq n$ and let $x_2, \cdots, x_t$ be the collection of $x_i$s that gives the maximum in (\ref{eq:x}). Recall that for all $i,j$ $|x_i-x_j|\leq 1$. If for two different $i,j\in[t]$ $x_i=x_j>3$, then $m> 6+3(t-2)=3t$, and $s<3$ implies that $t<n$. Replacing $x_i$ by $x_i-1$, $x_j$ by $x_j-1$ and setting $x_{t+1}=2$ increases the product. Therefore, there is at most one $i$, such that $x_i=4$. 
If there is $i$ such that $x_i=1$, then replacing any $x_j$ ($j \neq i$) by $x_j+1$ and deleting $x_i$ increases the product. If for some $i,j,k\in [t]$ $x_i=x_j=x_k=2$, then replacing $x_i$ by $3$, $x_j$ by 3 and deleting $x_k$ increases the product. Therefore, $\{x_2,\dots, x_t\}\in \left\{ \{3,3,\dots,3,2,2\},\{3,3,\dots,3,4\},\{3,3,\dots,3,2\}, \{3,3, \dots, 3\} \right\}$. Then $x_2 \dots x_t$ is at most $3^{\frac{m}{3}}$, so the number of cycles that pass through $v_1$ is at most  $$\frac{\Delta}{2} x_2\dots x_t\leq \frac{\Delta}{2} 3^{\frac{m}{3}}.$$

\end{proof}

\begin{proof}[Proof of the Theorem \ref{thm:newbound}]
The proof is by mathematical induction on $n$.\\
\textit{Base case}. If $n=2$, there is only one multigraph on $n$ vertices with $m$ edges -- two vertices connected by $m$ edges. In this case $s=\frac{m}{n-1}=m$, and $G$ has $\max\{\binom{m}{2},0\}$ cycles, which is less than $\frac{3}{4}m(\sqrt[3]{3})^m$ (for the case $m<3$), and less than $\frac34 m\cdot m$ (for the case $m\geq 3$).\\
\textit{Inductive step}. Let $k\geq 3$ be an integer, and suppose that the statement of the theorem is proved for $n=k-1$. Let $G$ be a multigraph with $k$ vertices, $m$ edges and let $v_1$ be a vertex of maximal degree in $G$.\\
Suppose that $\frac{m}{k-1}<3$.\\
If $\Delta(G)\leq 2$, then every edge is contained in at most one cycle, and every cycle contains at least two edges, so the number of cycles in $G$ is at most $$\frac{m}{2}\leq \frac34 \Delta(G)\cdot (\sqrt[3]{3})^m.$$
If $\Delta(G)\geq 3$, then the multigraph $G-v_1$ has at most $m-3$ edges, $\Delta(G-v_1)\leq \Delta(G)$ and $\frac{|E(G-v_1)|}{|V(G-v_1)|-1}\leq \frac{m}{k-1}<3$, therefore, by inductive assumption, the number of cycles in $G-v_1$ is at most $\frac{3}{4}\Delta(G)\cdot (\sqrt[3]{3})^{m-3}$. By Lemma~\ref{l:maxdeg}, the number of cycles that contain $v_1$ is at most $\frac{\Delta (G)}{2}\cdot(\sqrt[3]{3})^m$, therefore the total number of cycles in $G$ is at most
$$\frac{\Delta (G)}{2}\cdot(\sqrt[3]{3})^m+\frac{3}{4}\Delta(G)\cdot (\sqrt[3]{3})^{m-3}=\frac34\Delta(G)\cdot (\sqrt[3]{3})^m.$$
Suppose that $\frac{m}{k-1}\geq3$.\\
Let $s=\lfloor\frac{m}{k-1}\rfloor$, $\alpha=\frac{m}{k-1}-\lfloor\frac{m}{k-1}\rfloor$. Note that $\Delta(G-v_1)\leq \Delta (G)$ and let $$y=\frac{|E(G-v_1)|}{|V(G-v_1)|-1}\leq\frac{m}{k-1}.$$
Note that the function $$f(x)=(\lfloor x\rfloor)^{1-x+\lfloor x\rfloor}(\lfloor x\rfloor+1)^{x-\lfloor x\rfloor}$$ is non-decreasing on every interval $[a,a+1], a\in \mathbb{Z}_{\geq 0}$ (and hence on $\mathbb{R}^+$), therefore
\begin{equation}\label{eq:ss}s^{1-\alpha}(s+1)^{\alpha}\geq f(3)=3.\end{equation}
If $y\geq 3$, then, by the induction hypothesis,
\begin{align*}
|E(G-v_1)|& \leq\frac34\Delta(G)((\lfloor y\rfloor)^{1-y+\lfloor y\rfloor}(\lfloor y\rfloor+1)^{y-\lfloor y\rfloor})^{k-2}\\
&\leq\frac34\Delta(G)(s^{1-\alpha}(s+1)^{\alpha})^{k-2}.
\end{align*}
If $y<3$, then $|E(G-v_1)|<3(k-2)$, and by the induction hypothesis
\begin{align*}
    |E(G-v_1)|&\leq\frac34\Delta(G)(\sqrt[3]{3})^{|E(G-v_1)|}<\frac34\Delta(G)(\sqrt[3]{3})^{3(k-2)}\\
    &=\frac34\Delta(G)\cdot 3^{k-2}\leq\frac34\Delta(G)(s^{1-\alpha}(s+1)^{\alpha})^{k-2}.
\end{align*}
Hence, for any $y$, $|E(G-v_1)|\leq\frac34\Delta(G)(s^{1-\alpha}(s+1)^{\alpha})^{k-2}$, which together with Lemma~\ref{l:maxdeg} and (\ref{eq:ss}) implies that 
\begin{align*}
   C(G) &=\frac{3\Delta(G)}{4}(s^{1-\alpha}(s+1)^{\alpha})^{k-2}+\frac{\Delta(G)}{2}(s^{1-\alpha}(s+1)^{\alpha})^{k-1}\\
    &\leq\frac{3\Delta(G)}{4}(s^{1-\alpha}(s+1)^\alpha)^{k-1},\\
\end{align*}
which proves the inductive step and hence the theorem.
\end{proof}

A consequence of Theorem \ref{thm:newbound} is
\begin{corollary}\label{cor:upperbound}
For any integer $m$
$$C(m)<8.25(\sqrt[3]{3})^m.$$
\end{corollary}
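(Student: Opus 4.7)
The plan is to combine the degree bound from Theorem~\ref{thm:maxdegree} with the sharper vertex-sensitive estimate from Theorem~\ref{thm:newbound}, then reduce the corollary to a one-variable calculus inequality. Small $m$ (say $m\le 2$, where $C(m)=0$) is trivial, so assume $m\ge 3$ and let $G$ be an extremal graph realizing $C(G)=C(m)$, with $n\ge 3$ vertices. By Theorem~\ref{thm:maxdegree}, $\Delta(G)\le 11$, so it suffices to control the exponential factor in Theorem~\ref{thm:newbound} uniformly in $n$.

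If $\tfrac{m}{n-1}<3$, Theorem~\ref{thm:newbound} directly gives
$C(G)<\tfrac34\Delta(G)(\sqrt[3]{3})^m\le \tfrac34\cdot 11\cdot (\sqrt[3]{3})^m=8.25\,(\sqrt[3]{3})^m$,
and we are done. The case $\tfrac{m}{n-1}\ge 3$ is the substantive one: with $s=\lfloor\tfrac{m}{n-1}\rfloor$ and $\alpha=\tfrac{m}{n-1}-s$, Theorem~\ref{thm:newbound} gives
\[
C(G)<\tfrac34\Delta(G)\bigl(s^{1-\alpha}(s+1)^{\alpha}\bigr)^{n-1}
= \tfrac34\Delta(G)\Bigl(\bigl(s^{1-\alpha}(s+1)^{\alpha}\bigr)^{1/(s+\alpha)}\Bigr)^{m},
\]
so it is enough to show that the base
$\bigl(s^{1-\alpha}(s+1)^{\alpha}\bigr)^{1/(s+\alpha)}$
is at most $\sqrt[3]{3}$ whenever $s\ge 3$ and $\alpha\in[0,1)$.

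This is the only real obstacle, and I would handle it by studying the single function
$\varphi(x)=\tfrac{1}{x}\bigl[(1-\{x\})\log\lfloor x\rfloor+\{x\}\log(\lfloor x\rfloor+1)\bigr]$
on $[3,\infty)$, which equals the logarithm of the above base at $x=s+\alpha$. On any interval $[k,k+1]$ with $k\ge 3$, differentiating with respect to the fractional part and simplifying reduces the sign of $\varphi'$ to that of $k\log(1+1/k)-\log k$; since $k\log(1+1/k)<1<\log k$ for $k\ge 3$, $\varphi$ is strictly decreasing on each such interval. At integer points $\varphi$ is continuous, both one-sided values equalling $(\log k)/k$, and the sequence $(\log k)/k$ is itself decreasing for $k\ge 3$. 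Hence $\varphi$ attains its maximum on $[3,\infty)$ at $x=3$, with value $(\log 3)/3$, yielding $\bigl(s^{1-\alpha}(s+1)^{\alpha}\bigr)^{1/(s+\alpha)}\le \sqrt[3]{3}$.

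Plugging this back in and using $\Delta(G)\le 11$ gives $C(m)=C(G)<\tfrac34\cdot 11\cdot (\sqrt[3]{3})^m=8.25\,(\sqrt[3]{3})^m$ in the remaining case, completing the proof. The only non-routine ingredient is the monotonicity analysis of $\varphi$; everything else is a direct assembly of Theorems~\ref{thm:maxdegree} and~\ref{thm:newbound}.
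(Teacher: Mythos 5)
Your proposal is correct and follows essentially the same route as the paper: combine $\Delta(G)\le 11$ from Theorem~\ref{thm:maxdegree} with Theorem~\ref{thm:newbound}, and reduce the $\frac{m}{n-1}\ge 3$ case to showing $\bigl(s^{1-\alpha}(s+1)^{\alpha}\bigr)^{1/(s+\alpha)}\le\sqrt[3]{3}$. The only difference is that you carry out in detail the monotonicity computation (correctly — the derivative's sign on $[k,k+1]$ is indeed that of $k\log(1+1/k)-\log k$) which the paper merely asserts via ``$f(s,\alpha)$ is monotone in $\alpha$'' and $\max_{s}s^{1/s}=\sqrt[3]{3}$.
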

\begin{proof}
Let $G$ be a graph with $n$ vertices and $m$ edges, such that $C(G)=C(m)$. Suppose that $\frac{m}{n-1}\geq 3$. Let $f(s,\alpha)=(s^{1-\alpha}(s+1)^{\alpha})^{\frac{1}{s+\alpha}}$, then for any $s>0$, $f(s,\alpha)$ is monotone in $\alpha$ and $\displaystyle \max_{s\in \mathbb{Z_+},\alpha\in [0,1)}f(s,\alpha)=\max_{s\in \mathbb{Z_+}} {s^{\frac{1}{s}}}=\sqrt[3]{3}$. This, together with Theorem \ref{thm:newbound} and Theorem~\ref{thm:maxdegree}, implies that for $s=\lfloor\frac{m}{n-1}\rfloor$ and $\alpha=\frac{m}{n-1}-\lfloor\frac{m}{n-1}\rfloor$ $$C(m)=C(G)<\frac34 \Delta(G)((s^{1-\alpha}(s+1)^{\alpha})^{\frac{1}{s+\alpha}})^m\leq 8.25(\sqrt[3]{3})^m.$$
If $\frac{m}{n-1}<3$, then, by Theorem~\ref{thm:maxdegree} and Theorem \ref{thm:newbound} ,
$$C(m)=C(G)<\frac34 \Delta(G) (\sqrt[3]{3})^m\leq 8.25(\sqrt[3]{3})^m.$$
\end{proof}

\section{Example of a graph with $(1.37)^m$ cycles}\label{sec:example}
For $n\geq 1$ let $H_n$ be the graph on $2n+2$ vertices with $$V(H_n)=\{u_1, u_2, \dots, u_{n+1}, v_1, v_2, \dots v_{n+1}\} \text{\hspace{1cm}and}$$ 
$$E(H_n)=\{u_{i}v_{j} : i,j \in [n+1], |i-j|\leq 1\} \cup \{u_{i}u_{i+1} : i \in [n]\}\cup \{v_{i}v_{i+1} : i \in [n]\}.$$ 
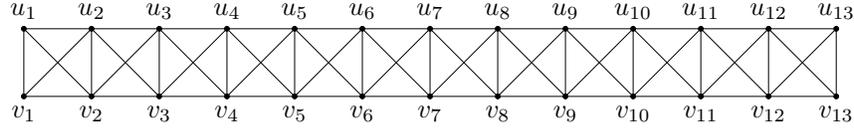
\begin{figure}[htbp]
\begin{center}\begin{tikzpicture}[scale=0.9, line cap=round,line join=round,x=1.0cm,y=1.0cm]
\clip(-6.3,-0.5) rectangle (6.3,1.5);
\foreach \a in {1,2,...,13} { 
\draw[fill] (-7+\a,0) circle (1pt); 
}
\foreach \a in {1,2,...,13} { 
\draw[fill] (-7+\a,1) circle (1pt); 
}

\foreach \a in {1,2,...,13} { 
\draw[line width=0.1pt, color=black, draw opacity=1] (-7+\a,0)--(-7+\a,1); 
}

\foreach \a in {1,2,...,12} { 
\draw[line width=0.1pt, color=black, draw opacity=1] (-7+\a,0)--(-6+\a,0); 
}

\foreach \a in {1,2,...,12} { 
\draw[line width=0.1pt, color=black, draw opacity=1] (-7+\a,1)--(-6+\a,1); 
}

\foreach \a in {1,2,...,12} { 
\draw[line width=0.1pt, color=black, draw opacity=1] (-7+\a,0)--(-6+\a,1); 
}

\foreach \a in {1,2,...,12} { 
\draw[line width=0.1pt, color=black, draw opacity=1] (-7+\a,1)--(-6+\a,0); 
}

\foreach \a in {1,2,...,13} { 
\draw[fill] (-7+\a,0) circle (0pt) node[below] {$v_{\a}$} ;
}
\foreach \a in {1,2,...,13} { 
\draw[fill] (-7+\a,1) circle (0pt) node[above] {$u_{\a}$} ;
}

\end{tikzpicture}
\caption{Graph $H_{12}$.}\label{fig:H12}
\end{center}
\end{figure}

For $n\geq 1$ denote by $P(n)$ the number of paths from the vertex $u_1$ to the vertex $u_{n+1}$ in $H_n$. Note that $P(n)$ is also equal to the number of paths from $u_1$ to $v_{n+1}$ in $H_n$.

\begin{claim}
For all $n\geq 2$ 
$$P(n)=4P(n-1)+4P(n-2).$$
\end{claim}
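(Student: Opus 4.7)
The plan is to condition on how a path from $u_1$ to $u_{n+1}$ in $H_n$ uses the last column. For $k\ge 1$ let $p_k$ (respectively $q_k$) denote the number of paths from $u_1$ to $u_k$ (respectively from $u_1$ to $v_k$) in the subgraph of $H_n$ induced by the first $k$ columns, which is precisely $H_{k-1}$; thus $P(n)=p_{n+1}$, and the note in the text asserts $P(n)=q_{n+1}$, which I will derive as a by-product rather than assume.

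Fix $k\ge 3$ and classify each path $\pi$ from $u_1$ to $u_k$ in $H_{k-1}$ by the behaviour of $v_k$, whose neighbourhood is $\{u_{k-1},v_{k-1},u_k\}$. In \emph{Case A}, where $v_k\notin\pi$, the path lies in $H_{k-1}\setminus\{v_k\}$, in which $u_k$ is a pendant attached to $\{u_{k-1},v_{k-1}\}$; stripping the last edge gives a path in $H_{k-2}$ from $u_1$ to $u_{k-1}$ or to $v_{k-1}$, contributing $p_{k-1}+q_{k-1}$. In \emph{Case B}, where $v_k$ is the penultimate vertex, $\pi$ ends $\ldots x\to v_k\to u_k$ with $x\in\{u_{k-1},v_{k-1}\}$, so the prefix is a path in $H_{k-2}$ ending at $x$, contributing another $p_{k-1}+q_{k-1}$. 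In \emph{Case C}, where $v_k$ is interior to $\pi$ and not adjacent to $u_k$, the two path-neighbours of $v_k$ must both lie in $\{u_{k-1},v_{k-1}\}$, so $\pi = u_1\to\cdots\to y_1\to v_k\to y_2\to\cdots\to u_k$ with $\{y_1,y_2\}=\{u_{k-1},v_{k-1}\}$. The crucial observation is that after $y_2$ the path cannot re-enter column $k-1$ (both of its vertices are already used) and cannot revisit $v_k$, so the suffix is forced to be the single edge $y_2\to u_k$. The prefix $u_1\to y_1$ ranges over paths in $H_{k-2}\setminus\{y_2\}$, a graph in which $y_1$ has become a pendant with neighbours $\{u_{k-2},v_{k-2}\}$, giving $p_{k-2}+q_{k-2}$ such prefixes; summing over the two orderings of $(y_1,y_2)$ yields $2(p_{k-2}+q_{k-2})$.

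Adding the three contributions, $p_k = 2(p_{k-1}+q_{k-1}) + 2(p_{k-2}+q_{k-2})$ for $k\ge 3$. The identical case analysis with the roles of $u_k$ and $v_k$ swapped (classify paths $u_1\to v_k$ by their interaction with $u_k$) produces the same recursion for $q_k$; combined with the base values $p_1=q_1=1$ and $p_2=q_2=5$ (verified directly in $H_0$ and $H_1=K_4$), this forces $p_k=q_k$ for every $k\ge 1$. Substituting $q_k=p_k$ collapses the recursion to $p_k=4p_{k-1}+4p_{k-2}$, i.e.\ $P(n)=4P(n-1)+4P(n-2)$ for $n\ge 2$ (the boundary case $n=2$ uses the convention $P(0):=p_1=1$, i.e.\ the trivial one-vertex path at $u_1$, consistent with the direct count $P(2)=24$).

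The only delicate step is Case C, and specifically the assertion that the segment of $\pi$ between $y_2$ and $u_k$ must be the single edge $y_2u_k$. This rests on the fact that $\{u_{k-1},v_{k-1}\}$ has just two vertices and both are already consumed by $y_1$ and $y_2$, so the path cannot return to column $k-1$; once this is in hand, everything else reduces to routine bookkeeping.
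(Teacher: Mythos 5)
Your proof is correct and follows essentially the same route as the paper's sketch: both decompose each path from $u_1$ to $u_{n+1}$ according to how it traverses the last column, yielding prefixes counted by $P(n-1)$ and $P(n-2)$ with, respectively, $2+2$ and $2+2$ admissible suffixes. The only real difference is cosmetic --- the paper invokes the symmetry between paths ending at $u_{n+1}$ and at $v_{n+1}$ directly, whereas you recover $p_k=q_k$ from the fact that both sequences satisfy the same recursion with the same initial values; both are valid, and your treatment of Case C and of the $n=2$ boundary is a careful filling-in of details the paper leaves implicit.
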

\begin{proof}[Proof sketch]
The proof of the claim relies on an inductive argument and an observation that each path from $u_1$ to $u_{n+1}$ in $H_n$ corresponds to exactly one of the following paths:
\begin{itemize}
\item path from $u_1$ to $u_n$ in $H_{n-1}$ followed by the path $u_nu_{n+1}$ or by the path $u_nv_{n+1}u_{n+1}$. 
\item path from $u_1$ to $v_n$ in $H_{n-1}$ followed by the path $v_nu_{n+1}$ or by the path $v_nv_{n+1}u_{n+1}$. 
\item path from $u_1$ to $u_{n-1}$ in $H_{n-2}$ followed by the path $u_{n-1}u_{n}v_{n+1}v_nu_{n+1}$ or by the path $u_{n-1}v_{n}v_{n+1}u_nu_{n+1}$. 
\item path from $u_1$ to $v_{n-1}$ in $H_{n-2}$ followed by the path $v_{n-1}u_{n}v_{n+1}v_nu_{n+1}$ or by the path $v_{n-1}v_{n}v_{n+1}u_nu_{n+1}$. 
\end{itemize}
\end{proof}
Solving the recurrence relation leads to the inequality 
$$P(n)\geq (2+2\sqrt{2})^n.$$
Define the graph $G_n$ by identifying vertices $u_1$ and $u_n$ in $H_n$. Then $G_n$ has $2n+1$ vertices, $m=5n+1$ edges and  
$$C(G_n)\geq (2+2\sqrt{2})^{n}.$$

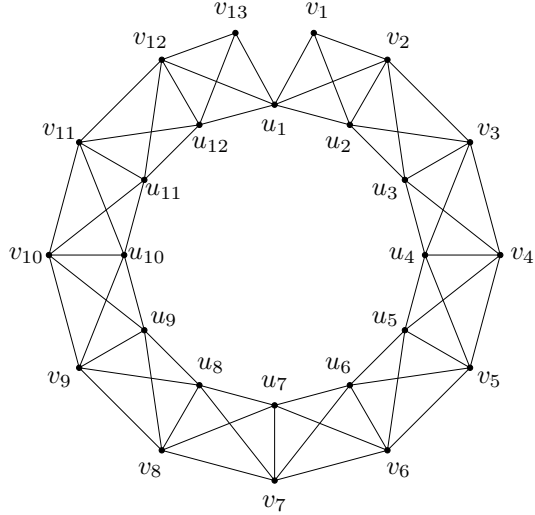
\begin{figure}[H]
\begin{center}\begin{tikzpicture}[scale=1, line cap=round,line join=round,x=1.0cm,y=1.0cm]
\clip(-5,-3.5) rectangle (4,3.5);
\foreach \a in {0,30,...,330} { 
\draw[fill] (\a+120:2cm) circle (1pt); 
}
\foreach \a in {0,30,...,300} { 
\draw[fill] (\a+120:3cm) circle (1pt) ; 
}

\foreach \a in {2,3,...,12} { 
\draw[fill] (-\a*30+120:3.3cm) circle (0pt) node {$v_{\a}$} ; 
}

\foreach \a in {1,2,...,12} { 
\draw[fill] (-\a*30+120:1.7cm) circle (0pt) node {$u_{\a}$} ; 
}

\foreach \a in {0,30,...,300} { 
\draw[line width=0.1pt, color=black, draw opacity=1] (\a+120:3cm)--(\a+120:2cm); 
}

\foreach \a in {0,30,...,270} { 
\draw[line width=0.1pt, color=black, draw opacity=1] (\a+120:3cm)--(\a+30+120:2cm); 
}

\foreach \a in {30,60,...,300} { 
\draw[line width=0.1pt, color=black, draw opacity=1] (\a+120:3cm)--(\a-30+120:2cm); 
}

\foreach \a in {30,60,...,300} { 
\draw[line width=0.1pt, color=black, draw opacity=1] (\a+120:3cm)--(\a-30+120:3cm); 
}

\foreach \a in {30,60,...,360} { 
\draw[line width=0.1pt, color=black, draw opacity=1] (\a+120:2cm)--(\a-30+120:2cm); 
}
\draw[fill] (-20+120:3cm) circle (1pt);
\draw[fill] (-40+120:3cm) circle (1pt);
\draw[fill] (-20+120:3.3cm) circle (0pt) node {$v_{13}$} ;
\draw[fill] (-40+120:3.3cm) circle (0pt) node {$v_{1}$} ;
\draw[line width=0.1pt, color=black, draw opacity=1] (-30+120:2cm)--(0+120:3cm);
\draw[line width=0.1pt, color=black, draw opacity=1] (-30+120:2cm)--(-20+120:3cm);
\draw[line width=0.1pt, color=black, draw opacity=1] (-30+120:2cm)--(-40+120:3cm);
\draw[line width=0.1pt, color=black, draw opacity=1] (-30+120:2cm)--(-60+120:3cm);
\draw[line width=0.1pt, color=black, draw opacity=1] (-20+120:3cm)--(0+120:3cm);
\draw[line width=0.1pt, color=black, draw opacity=1] (-40+120:3cm)--(-60+120:3cm);
\draw[line width=0.1pt, color=black, draw opacity=1] (-0+120:2cm)--(-20+120:3cm);
\draw[line width=0.1pt, color=black, draw opacity=1] (-60+120:2cm)--(-40+120:3cm);

\end{tikzpicture}
\caption{$G_{12}$ with 25 vertices and 61 edges.}\label{fig:G12}
\end{center}
\end{figure}
For an integer $m$ let graph  $G$ be obtained from $G_{\lfloor \frac{m-1}{5} \rfloor}$ by adding $(m-5\lfloor \frac{m-1}{5} \rfloor-1)$ edges. Then $G$ has $m$ edges and for $m$ large enough
$$C(G) \geq C(G_{\lfloor\frac{m-1}{5}\rfloor}) \geq (2+2\sqrt{2})^{\lfloor\frac{m-1}{5}\rfloor} \geq (2+2\sqrt{2})^{\frac{m}{5}-1}> 1.37^m.$$

\section{Maximum number of cycles in multigraphs}\label{sec:multi}

The problems of maximizing the number of cycles with fixed number of edges or fixed average degree can be also considered for multigraphs. 
Using the techniques presented in this paper, the authors can prove the following two results.

\begin{theorem}\label{multidegree}
Let $G$ be a multigraph that has the maximum number of cycles among all the multigraphs with $n \geq 2$ vertices and $m\geq 3$ edges. Let $\lfloor\frac{m}{n-1}\rfloor=s$, $\alpha=\frac{m}{n-1}-s$.\\
If $\frac{m}{n-1}\geq 3$, then
$$\frac{8}{27}s(s^{1-\alpha}(s+1)^{\alpha})^{n-1}\leq C(G)\leq \frac34 \Delta(G) (s^{1-\alpha}(s+1)^{\alpha})^{n-1}.$$
If $\frac{m}{n-1}\leq 3$, then
$$ 4(\sqrt[3]{3})^{m-4} \leq C(G) < \frac34 \Delta (G) \cdot (\sqrt[3]{3})^m$$
\end{theorem}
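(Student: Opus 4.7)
The upper bound in each case of Theorem~\ref{multidegree} requires no new work: Theorem~\ref{thm:newbound} is stated for arbitrary multigraphs and its two conclusions match the two upper bounds of Theorem~\ref{multidegree} verbatim. So the two lower bounds are all that remains to be shown, which I would do by exhibiting explicit multigraphs on $n$ vertices and $m$ edges; since $G$ is extremal, $C(G)$ is at least the cycle count of any witness.

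My construction is a \emph{balanced multi-cycle}: place vertices $v_1,\ldots,v_k$ in a cycle and put $x_i$ parallel edges between $v_i$ and $v_{i+1}$ (indices mod $k$), with $\sum x_i=m$ and the $x_i$ as equal as possible. Since the base graph has no chords, every cycle in such a multigraph is either a Hamilton cycle (one edge chosen from each multi-edge, contributing $\prod_i x_i$) or a digon (two parallel edges inside a single multi-edge, contributing $\sum_i\binom{x_i}{2}$). For $m/(n-1)\geq 3$ I would take $k=n$ with $x_i\in\{\lfloor m/n\rfloor,\lceil m/n\rceil\}$. For $m/(n-1)\leq 3$ (which forces $n\geq m/3+1$), writing $m=3k'+r$ with $r\in\{0,1,2\}$, I would take multi-cycles on $k'$, $k'$, or $k'+1$ vertices with multiplicity profiles $(3,\ldots,3)$, $(4,3,\ldots,3)$, or $(3,\ldots,3,2)$ respectively, padded with isolated vertices up to $n$.

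In the first case the verification reduces to
$$\prod_{i=1}^n x_i + \sum_{i=1}^n \binom{x_i}{2}\ \geq\ \tfrac{8}{27}\,s\,\bigl(s^{1-\alpha}(s+1)^\alpha\bigr)^{n-1}.$$
Replacing floors by their continuous values yields the leading inequality $(m/n)^n\geq \tfrac{8}{27}(m/(n-1))^n$, equivalent to $\bigl((n-1)/n\bigr)^n\geq 8/27$, which holds for all $n\geq 3$ (with equality at $n=3$ and propagating upward by monotonicity of $(1-1/n)^n$). In the second case the Hamilton cycles alone contribute $3^{k'}$, $4\cdot 3^{k'-1}$, or $2\cdot 3^{k'}$ depending on $r$, each clearing $4(\sqrt[3]{3})^{m-4}$: the $r=1$ profile meets it exactly, while $3^{4/3}>4$ and $2\cdot 3^{2/3}>4$ supply the slack in the other two. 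The small values $m\in\{3,\ldots,7\}$ where the multi-cycle degenerates are covered by a two-vertex multigraph with $m$ parallel edges, having $\binom{m}{2}$ cycles.

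The main obstacle is rigorously handling the integer rounding in the first case: because the leading inequality $((n-1)/n)^n\geq 8/27$ is tight at $n=3$, the deficit from replacing the continuous $(m/n)^n$ by the integer balanced product $\prod x_i$ must be paid for. The plan is to show the digon sum $\sum\binom{x_i}{2}$, which is of order $m^2/n$, always absorbs this loss, either via a log-concavity argument on $k\mapsto\log\max\{\prod_i x_i : x_i\in\mathbb{Z}_+,\ \sum x_i=m\}$, or by a short case analysis on the residues $m\bmod n$ and $m\bmod(n-1)$, the worst cases being $n=3$ with $m$ close to a multiple of $3$.
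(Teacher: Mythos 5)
Your proposal follows the paper's approach exactly: the upper bounds are quoted from Theorem~\ref{thm:newbound}, and the lower bounds come from the balanced multi-cycle $C_{n,m}$ (respectively $C_{\lfloor (m+1)/3\rfloor,m}$ padded with isolated vertices), which are precisely the witnesses the paper names without giving any further verification. Your additional observations --- that the digon count is genuinely needed to absorb the integer-rounding loss in the first case (e.g.\ $n=3$, $m=8$, where the Hamilton-cycle product $18$ alone falls short of $512/27$) and that the $m\equiv 1\pmod 3$ profile meets the second lower bound with equality --- are correct and go beyond what the paper records.
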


The upper bounds in Theorem \ref{multidegree} follow from the Theorem \ref{thm:newbound}. For the lower bounds, define $C_{n,m}$ to be the multigraph obtained from the cycle $C_{n}$ by replacing each of some $m-\lfloor\frac{m}{n}\rfloor n$ consecutive edges with $\lfloor\frac{m}{n}\rfloor+1$ "multi-edges" and the rest $\lfloor\frac{m}{n}\rfloor n -m+n$ edges with $\lfloor\frac{m}{n}\rfloor$ "multi-edges". The lower bound in the first case is achieved by the graph $C_{n,m}$.  The lower bound in the second case is achieved by the graph $C_{\lfloor \frac{m+1}{3}\rfloor,m}$ with extra $n-\lfloor \frac{m+1}{3}\rfloor$ isolated vertices.

\begin{theorem}\label{multiedges}
Let $G$ be a multigraph with $m\geq 3$ edges that has the maximum number of cycles among all the multigraphs with $m$ edges. Then 
$$ \frac{9}{10} (\sqrt[3]{3})^m < 4(\sqrt[3]{3})^{m-4}\leq C(G)\leq 8.25 (\sqrt[3]{3})^m$$
\end{theorem}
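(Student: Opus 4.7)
The statement bundles three claims: a preliminary arithmetic inequality $\frac{9}{10}(\sqrt[3]{3})^m < 4(\sqrt[3]{3})^{m-4}$, a lower bound, and an upper bound. The inequality reduces to $\frac{9}{40}<3^{-4/3}$, i.e. $9^{3}\cdot 3^{4}=59049<64000=40^3$, a one-line verification. My plan for the upper bound is to invoke Theorem~\ref{thm:newbound} (which is already stated for multigraphs) together with the bound $\Delta(G)\leq 11$ on an extremal multigraph $G$. As in the proof of Corollary~\ref{cor:upperbound}, the function $f(s,\alpha)=(s^{1-\alpha}(s+1)^\alpha)^{1/(s+\alpha)}$ attains its maximum over $s\in\mathbb{Z}^+,\alpha\in[0,1)$ at $s=3,\alpha=0$, giving $\sqrt[3]{3}$, so Theorem~\ref{thm:newbound} yields $C(G)<\tfrac{3}{4}\Delta(G)(\sqrt[3]{3})^m$ in both regimes of $m/(n-1)$. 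Combined with $\Delta(G)\leq 11$ this gives $C(G)<8.25(\sqrt[3]{3})^m$.

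To establish $\Delta(G)\leq 11$ in the multigraph setting, I will essentially repeat the proof of Theorem~\ref{thm:maxdegree}. Assume $\deg(u)=k\geq 12$, enumerate the edges at $u$ as $e_1,\dots,e_k$ with other endpoints $u_1,\dots,u_k$ (with repetitions permitted when $u$ is incident to parallel edges). Define $w_{i,j}$ as the number of paths in $G-u$ from $u_i$ to $u_j$, counting the trivial length-$0$ ``path'' as $1$ when $u_i=u_j$; then $S=\sum_{i<j}w_{i,j}$ counts all cycles through $u$, including the 2-cycles from pairs of parallel edges at $u$. Lemmas~\ref{lemma:deletethree} and~\ref{lemma:tripartition} then apply verbatim to the $w_{i,j}$. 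The construction of $H$ is identical, with each edge $e_i$ re-attached from $u_i$ to the appropriate $v_l$; the identities $|E(H)|=|E(G)|$ and $C(H)-C(G)\geq 7 + S\cdot\tfrac{3(k-4)(k-12)}{k(k-1)}>0$ for $k\geq 12$ go through unchanged.

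For the lower bound I will exhibit the explicit multigraphs $C_{n,m}$ defined after Theorem~\ref{multidegree}. When $C_n$ is given classes of $k_1,\dots,k_n$ parallel edges with $\sum k_i=m$, every cycle of $C_{n,m}$ is either a 2-cycle from a pair of parallel edges within one class or an $n$-cycle using exactly one edge from each class, so $C(C_{n,m})=\sum_i\binom{k_i}{2}+\prod_i k_i$. I distribute the multiplicities as close to $3$ as possible, according to $m\bmod 3$. If $m=3n$, all $k_i=3$, giving $\prod k_i=3^n=3^{m/3}\geq 4\cdot 3^{(m-4)/3}$ because $3^{4/3}\geq 4$ (equivalently $81\geq 64$). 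If $m=3n+1$, a single $k_i$ is $4$ and the rest are $3$, giving $\prod k_i=4\cdot 3^{n-1}=4\cdot 3^{(m-4)/3}$ exactly. If $m=3n+2$, I use $n+1$ classes with one $k_i=2$ and the rest $3$, giving $\prod k_i=2\cdot 3^n=2\cdot 3^{(m-2)/3}\geq 4\cdot 3^{(m-4)/3}$ since $3^{2/3}\geq 2$ (equivalently $9\geq 8$). Small-$m$ cases where the cycle $C_n$ would degenerate are covered by the 2-vertex multigraph with all $m$ edges parallel, yielding $\binom{m}{2}$ cycles, which easily dominates $4(\sqrt[3]{3})^{m-4}$ for $m\in\{3,4\}$.

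The only real obstacle is the multigraph adaptation of Theorem~\ref{thm:maxdegree}: one must check that the interpretation of $w_{i,j}$ when $u_i=u_j$ correctly accounts for the 2-cycles through $u$, and that after the local replacement at $u$ each cycle of $G$ maps to a distinct cycle of $H$ even when parallel edges are involved (a length-$2$ cycle $u$–$u_i$–$u$ becomes either a length-$2$ or length-$3$ cycle in $H$, plus the usual detours through the $v_l$). These points are conceptually routine but demand patient bookkeeping.
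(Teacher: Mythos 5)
Your proposal is correct and follows exactly the route the paper itself indicates: the upper bound via Theorem~\ref{thm:newbound} combined with a multigraph version of Theorem~\ref{thm:maxdegree} (repeating the argument of Corollary~\ref{cor:upperbound}), and the lower bound via the multigraph $C_{\lfloor\frac{m+1}{3}\rfloor,m}$, whose cycle count $\sum_i\binom{k_i}{2}+\prod_i k_i$ you compute correctly in each residue class of $m$ modulo $3$. In fact you supply more detail than the paper, which states this theorem with only a two-sentence proof sketch; your bookkeeping for the multigraph adaptation of Theorem~\ref{thm:maxdegree} (handling $2$-cycles via trivial paths when $u_i=u_j$) is the right way to fill the gap the authors leave implicit.
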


The upper bound in Theorem \ref{multiedges} can be obtained by repeating the argument of Corollary \ref{cor:upperbound} and a version of Theorem \ref{thm:maxdegree}, modified for multigraphs. The example for the lower bound is the same as for the second case of Theorem~\ref{multidegree}.\\
Theorems \ref{multidegree} and \ref{multiedges} answer both questions for multigraphs up to a constant factor. The authors believe that for $m\geq 9$ the graph $C_{\lfloor \frac{m+1}{3} \rfloor,m}$ has the most cycles among all multigraphs with $m$ edges. 

\section{Concluding remarks}

Theorem \ref{thm:newbound} gives an upper bound for the number of cycles in a graph $G$ with $n$ vertices and $m$ edges. For a graph $G$ with $n$ vertices and average degree $d\geq 6$, Theorem~\ref{thm:newbound} implies
$$C(G) \leq 3\Delta(G)\left(\frac{d}{2}\right)^n.$$For $d=\Omega(\ln n)$, let $G$ be a random graph $G(n,p)$ with $p=\frac{d}{n-1}$. Glebov and Krivelevich~\cite{GK} proved that the number of cycles in $G$ is a.a.s. at least $ \left(\frac{d}{e}\right)^n(1+o(1))^n $. Therefore, if $G$ is a graph with the maximal number of cycles among all graphs with $n$ vertices and average degree $d$, then for $n$ large enough
$$\left(\frac{d}{e}\right)^n(1+o(1))^n\leq C(G)\leq (1+o(1))^n\left(\frac{d}{2}\right)^n.$$
This inequality and the fact that $C(K_n) \approx \frac{c}{\sqrt{n}}\left(\frac{n}{e}\right)^n$ for some constant $c$ (see \cite{AGT} for details) motivates the following conjecture.
\begin{conjecture}
For any $\alpha \in (0,1]$ and integer $n$ large enough any graph $G$ on $n$ vertices with average degree $d=\alpha n$ satisfies
$$C(G) \leq (1+o(1))^n\left( \frac{d}{e}\right)^n.$$
\end{conjecture}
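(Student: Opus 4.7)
The conjecture asserts that the extremal cycle count in the dense regime $d=\alpha n$ matches the typical count of $G(n,d/n)$, so any proof must explain why random-like graphs asymptotically maximise. The gap left open by Theorem~\ref{thm:newbound} is roughly a factor of $(e/2)^n$, a constant-base exponential, and it will not yield to a local sharpening of Lemma~\ref{l:completecycle}: that lemma bounds the cycles through a fixed vertex by a product of residual degrees, which for a balanced dense graph produces $(d/2)^n$ rather than $(d/e)^n$.

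My plan is to split the cycle count by length and combine a regularisation step with refined counts in each regime. First I would reduce to the case $\delta(G)\geq (1-o(1))d$ by repeatedly deleting vertices of degree much smaller than $d$, absorbing the cycles killed at each step into an induction hypothesis on smaller average degree; this is in the spirit of the reduction behind Theorem~\ref{thm:mindegree1}. Then I would partition cycles by length $k$ into three regimes. Short cycles $k\leq (1-\varepsilon)n$ are handled by the crude subgraph bound $c_k(G)\leq \frac{n^{k}}{2k}$, whose sum over this range is $o((d/e)^n)$ for $\varepsilon$ small. Near-Hamiltonian cycles $(1-\varepsilon)n<k<n$ are reduced to the Hamiltonian count on induced subgraphs of size $k$ times a combinatorial factor absorbed in the $(1+o(1))^n$ slack. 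The dominant term is thus the Hamiltonian count $k=n$.

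The crux, and the main obstacle, is bounding the number of Hamiltonian cycles in $G$ by $(d/e)^n(1+o(1))^n$. For Dirac graphs (minimum degree $\geq n/2$) the Cuckler--Kahn entropic argument delivers exactly this bound; after the regularisation step one has $\delta(G)\geq (1-o(1))d$, but $d=\alpha n$ may be far below $n/2$ when $\alpha$ is small, so Cuckler--Kahn does not directly apply. I would try to adapt their permanent-inequality argument via a random partition of $V(G)$ into blocks of size roughly $d/\alpha$ inducing near-Dirac subgraphs, and then count how Hamiltonian cycles of $G$ distribute across these blocks. Without a new idea here the plan only recovers the weaker rate $(d/2)^n$ already implicit in Theorem~\ref{thm:newbound}; closing the remaining $(e/2)^n$ factor to reach $(d/e)^n$ would likely require a fresh Br\'egman-type inequality tailored to dense directed cycle covers, and this is the step I would expect to consume essentially all the effort.
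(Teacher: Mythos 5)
The statement you are trying to prove is stated in the paper as a \emph{conjecture}: the authors offer no proof, only the motivation coming from the Glebov--Krivelevich lower bound for $G(n,p)$ and the asymptotics of $C(K_n)$. So there is no argument of theirs to compare yours against, and the only question is whether your proposal closes the problem. It does not, and you say so yourself: the plan bottoms out at the claim that the number of Hamiltonian cycles of a graph with average degree $d=\alpha n$ is at most $(d/e)^n(1+o(1))^n$, and you explicitly concede that without a new idea you only recover $(d/2)^n$, which is what Theorem~\ref{thm:newbound} already gives. A sketch whose decisive step is announced as unresolved is a gap, not a proof.

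Two more specific comments on the plan. First, the obstacle you locate at the Hamiltonian step is not quite the right one: the Cuckler--Kahn \emph{upper} bound (and, more elementarily, Br\'egman's permanent inequality applied to the adjacency matrix, since an oriented Hamilton cycle is a permutation with $A_{i\sigma(i)}=1$) bounds the number of Hamilton cycles of a $d$-regular graph by $(d!)^{n/d}=(d/e)^n(1+o(1))^n$ with no Dirac-type hypothesis, so for regular or near-regular graphs the $k=n$ term is actually under control. The genuine difficulties are elsewhere: (a) your regularisation step only guarantees a lower bound on the minimum degree, not an upper bound, so $\prod_i(d_i!)^{1/d_i}$ must be compared to $(d/e)^n$ for irregular degree sequences; and (b) in the regime $(1-\varepsilon)n<k<n$ you must sum a Br\'egman-type bound over all $\binom{n}{k}$ induced subgraphs, and for subgraphs containing many vertices of bounded induced degree the estimate $(d_i!)^{1/d_i}\approx d_i/e$ degrades by a factor of up to $e$ per such vertex, which is exactly of the order of the $(e/2)^n$ you are trying to save. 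Until those two points are handled, the proposal remains a programme rather than a proof, and the statement should still be regarded as open.
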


As mentioned in the introduction, Theorem \ref{cor:upperbound} and the result of Section~\ref{sec:example} imply that $1.37^m\leq C(m)\leq 1.443^m$. 

Kir\'{a}ly \cite{Kirali} conjectured that $C(m)<1.4^m$. The upper bound in Corollary~\ref{cor:upperbound} is $8.25(\sqrt[3]{3})^m$,  which inspired the following conjecture.

\begin{conjecture}
For sufficiently large $m$, there exists a graph $G$ with $m$ edges and at least $(1+o(1))^m(\sqrt[3]{3})^m$ cycles.
\end{conjecture}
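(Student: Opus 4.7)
The conjecture asks for a family of graphs matching, up to a $(1+o(1))^m$ factor, the upper bound $8.25(\sqrt[3]{3})^m$ of Corollary~\ref{cor:upperbound}. Inspecting that proof, the rate $\sqrt[3]{3}$ arises from the fact that $s^{1/s}$ is maximized over $s \in \mathbb{Z}^+$ at $s = 3$, and saturation in Lemma~\ref{l:maxdeg} requires almost every vertex of the extremal graph to have degree exactly $3$. So the natural target is an infinite family $(G_n)$ of essentially $3$-regular graphs on $n$ vertices and $m = 3n/2$ edges with $C(G_n) = (1+o(1))^n (\sqrt 3)^n$, which translates into the desired $(1+o(1))^m (\sqrt[3]{3})^m$.

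The first and most concrete plan is an explicit construction via the transfer-matrix method, in the spirit of Section~\ref{sec:example} but replacing the $5$-regular strip $H_n$ by a cubic ``slice''. Consider a long cubic ladder -- for instance a generalized Petersen graph $GP(n,k)$ or a suitably twisted cubic prism -- and introduce a finite set $\mathcal{S}$ of ``interface states'' recording how partial paths enter and exit a single slice. The number of cycles in the whole graph then becomes a trace-like expression in a transfer matrix $T \in \mathbb{Z}^{\mathcal{S} \times \mathcal{S}}$, growing like $\lambda(T)^n$ with $\lambda(T)$ the Perron eigenvalue. The goal would be to design a slice for which $\lambda(T) = \sqrt 3$ per three consecutive edges, so that cycles accumulate at rate $\sqrt[3]{3}$ per edge.

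An alternative, less constructive route is probabilistic: take $G$ to be a uniformly random cubic graph, sampled via the configuration model. For each fixed $k$, the expected number of $k$-cycles is $(1+o(1)) \cdot 2^k/(2k)$; summing this naively to $k = n$ overshoots the upper bound, so the asymptotic $2^k/(2k)$ must degrade for $k$ close to $n$. The plan would be to compute the expected number of $\alpha n$-cycles for each $\alpha \in (0,1]$ by a saddle-point analysis of the pairing model, then integrate over $\alpha$. If the dominant exponential rate in $\alpha$ is exactly $\ln\sqrt 3$, a second-moment/concentration argument yields an explicit sample with at least $(1+o(1))^n (\sqrt 3)^n$ cycles.

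The main obstacle in either route is hitting the constant $\sqrt[3]{3}$ exactly, not merely some rate above $1.37$. For the transfer matrix, this requires a cubic slice whose Perron eigenvalue is sharply $3$ against a budget of three edges per slice, a delicate algebraic requirement; for the probabilistic route, it demands simultaneous control over cycles of every length, including the polynomial corrections that distinguish $\sqrt 3$ from mildly smaller rates such as $(2+2\sqrt 2)^{1/5}$. A deeper conceptual difficulty is that the extremal scenario for Corollary~\ref{cor:upperbound} requires a locally tree-like neighborhood (each vertex extends to $3$ fresh vertices along any partial path) combined with an abundance of cycles of every length -- a tension naturally resolved by random or expander cubic graphs, but hard to realize in a closed-form explicit family.
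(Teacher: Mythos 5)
This statement is a \emph{conjecture}: the paper offers no proof of it, and your text is likewise a plan rather than a proof, so there is no argument here to verify. More importantly, the plan is built on a misreading of where the rate $\sqrt[3]{3}$ comes from, and that misreading dooms both of your routes before their technical difficulties even arise. In Lemma~\ref{l:maxdeg} the quantity $s^{1/s}$ is maximized at $s=\lfloor m/(n-1)\rfloor=3$, i.e.\ at \emph{three edges per vertex}, which means average degree about $6$, not $3$: each factor $f_i$ in the product is the number of fresh \emph{forward} edges at $v_i$ along a path, and to have $f_i=3$ for essentially every $i$ the vertices must have degree about $6$. The paper's conjectured multigraph extremum $C_{\lfloor (m+1)/3\rfloor,m}$ (a cycle with every edge tripled, giving $3^{m/3}$ Hamilton cycles) is exactly of this type. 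For a cubic graph, by contrast, every vertex $v_i$ with $i\geq 3$ on a path is adjacent to the already-deleted vertex $v_{i-1}$, so $f_i\leq 2$; combined with the constraint $f_2+\dots+f_t\leq m$ this caps the product in (\ref{eq:x}) at $O(2^{m/2})$, and the induction of Theorem~\ref{thm:newbound} then shows that \emph{every} cubic graph with $m$ edges has at most $O(2^{m/2})=O(1.4143^m)$ cycles --- exponentially below $(\sqrt[3]{3})^m\approx 1.4423^m$. So no ``essentially $3$-regular'' family, whether built by transfer matrices or sampled at random, can witness the conjecture; any candidate family must have $n\approx m/3$ vertices and average degree near $6$.

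Your probabilistic route fails for an additional, quantitative reason. A first-moment computation in the configuration model shows that the expected number of cycles of length $\alpha n$ in a random cubic graph has per-vertex exponential rate $\alpha\ln 6-(1-\alpha)\ln(1-\alpha)+\tfrac{3-2\alpha}{2}\ln(3-2\alpha)-\tfrac32\ln 3$, which is maximized at $\alpha=\tfrac34$ with value $\tfrac12\ln 2$ (at $\alpha=1$ it recovers the Robinson--Wormald rate $(4/3)^{n/2}$ for Hamilton cycles). Hence the expected \emph{total} number of cycles is $2^{n/2+o(n)}=2^{m/3+o(m)}\approx 1.26^m$, and by Markov's inequality a random cubic graph a.a.s.\ has at most that many cycles --- fewer even than the explicit $(2+2\sqrt 2)^{m/5}\approx 1.37^m$ construction of Section~\ref{sec:example}. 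No second-moment or concentration argument can rescue a first moment that is already too small. The transfer-matrix route is at least the right general shape (it is how the paper's own example works), but the entire content of the conjecture is the existence of a strip, of average degree about $6$, whose transfer matrix has Perron eigenvalue $3$ per three edges, and you do not exhibit a candidate.
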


\section{Acknowledgments}
We would like to thank Karen Gunderson and Jamie Radcliffe for helpful discussions. We would also like to thank David Gunderson for valuable comments and suggestions.

\begin{bibdiv}
\begin{biblist}

\bib{Ahrens}{article}{
   author={Ahrens, W.},
   title={\emph{Ueber das Gleichungssystem einer Kirchhoff'schen galvanischen
   Stromverzweigung}},
   language={German},
   journal={\emph{Math. Ann.}},
   volume={49},
   date={1897},
   pages={311--324},
}

\bib{AlTho}{article}{
   author={Aldred, R. E. L.},
   author={Thomassen, Carsten},
   title={\emph{On the maximum number of cycles in a planar graph}},
   journal={\emph{J. Graph Theory}},
   volume={57},
   date={2008},
   pages={255--264},
}

\bib{AGT}{article}{
   author={Arman, Andrii},
   author={Gunderson, David S.},
   author={Tsaturian, Sergei},
   title={\emph{Triangle-free graphs with the maximum number of cycles}},
   journal={\emph{Discrete Math.}},
   volume={339},
   date={2016},
   pages={699--711},
}	

\bib{bollobas}{book}{
   author={Bollob\'as, B\'ela},
   title={Modern graph theory},
   series={Graduate Texts in Mathematics},
   volume={184},
   publisher={Springer-Verlag, New York},
   date={1998},
   pages={xiv+394},
  }

\bib{Entringer}{article}{
   author={Entringer, R. C.},
   author={Slater, P. J.},
   title={\emph{On the maximum number of cycles in a graph}},
   journal={\emph{Ars Combin.}},
   volume={11},
   date={1981},
   pages={289--294},
}

\bib{GK}{article}{
   author={Glebov, Roman},
   author={Krivelevich, Michael},
   title={\emph{On the number of Hamilton cycles in sparse random graphs}},
   journal={\emph{SIAM J. Discrete Math.}},
   volume={27},
   date={2013},
   pages={27--42},
}

\bib{Kirali}{report}{
    author={Kir\'{a}ly, Z.},
    title={Maximum number of cycles and
hamiltonian cycles in sparse graphs},
    date={2009},
    note={Technical report, https://www.cs.elte.hu/egres/tr/egres-09-03.pdf, accessed 12 September 2016},
}

\bib{Konig}{book}{
   author={K{\"o}nig, D{\'e}nes},
   title={Theory of finite and infinite graphs},
   note={Translated from the German by Richard McCoart},
   publisher={Birkh\"auser Boston, Inc., Boston, MA},
   date={1990},
   pages={vi+426},
}

\bib{Mateti}{article}{
   author={Mateti, Prabhaker},
   author={Deo, Narsingh},
   title={\emph{On algorithms for enumerating all circuits of a graph}},
   journal={\emph{SIAM J. Comput.}},
   volume={5},
   date={1976},
   pages={90--99},
}

\bib{Mugnolo}{article}{
   author={D.\;Mugnolo},
   title={Graphentheorie},
   journal={},
   volume={},
   date={2011},
   pages={},
   issn={},
   review={},
   note={Lecture notes on Graph Theory, https://www.uni-ulm.de/fileadmin/website\textunderscore uni\textunderscore ulm/mawi.inst.010/mugnolo/graphenskript-defin.pdf, accessed 12 September 2016},
}

\bib{Shi}{article}{
   author={Shi, Yong Bing},
   title={\emph{The number of cycles in a Hamilton graph}},
   journal={\emph{Discrete Math.}},
   volume={133},
   date={1994},
   pages={249--257},
}

\bib{Tuza}{article}{
   author={Tuza, Zsolt},
   title={\emph{Problems on cycles and colorings}},
   journal={\emph{Discrete Math.}},
   volume={313},
   date={2013},
   pages={2007--2013},
}

\bib{Volkman}{article}{
   author={Volkmann, Lutz},
   title={{\emph{ Estimations for the number of cycles in a graph}}},
   journal={\it{Period. Math. Hungar.}},
   volume={33},
   date={1996},
   pages={153--161},
}

\end{biblist}
\end{bibdiv}

\end{document}